\theoremstyle{plain}
\newtheorem{prop}{Proposition}[section]
\newtheorem{coro}[prop]{Corollary}
\newtheorem{conj}[prop]{Conjecture}
\newtheorem{lemm}[prop]{Lemma}
\newtheorem{ques}[prop]{Question}
\newtheorem{thm}[prop]{Theorem}
\theoremstyle{definition}
\newtheorem{defn}[prop]{Definition}
\newtheorem{rem}[prop]{Remark}
\DeclareMathOperator{\breadth}{breadth}
\def\mcg#1;#2{\Gamma_{#1,#2}}
\def\fg#1;#2{\Pi_{#1,#2}}
\def\tb#1;#2{\mathscr{K}_{\frac{#1}{#2}}}
\begin{document}

\title[A new obstruction of quasi-alternating links]
{A new obstruction of quasi-alternating links}

\keywords{quasi-alternating links, determinant, $Q$-polynomial}

\author{Khaled Qazaqzeh}
\address{Department of Mathematics\\ Faculty of Science \\ Kuwait University\\
P. O. Box 5969\\ Safat-13060, Kuwait, State of Kuwait}
\email{khaled@sci.kuniv.edu.kw}

\author{Nafaa Chbili}
\address{Department of Mathematical Sciences\\ College of Science UAE University \\
17551 Al Ain, U.A.E.}
\email{nafaachbili@uaeu.ac.ae}
\urladdr{http://faculty.uaeu.ac.ae/nafaachbili}

\date{02/06/2014}

\begin{abstract}
We prove that the degree of the Brandt-Lickorish-Millet polynomial
of any quasi-alternating link is less than its determinant.
Therefore, we obtain a new and a simple obstruction criterion  for
quasi-alternateness. As an application, we identify some knots of 12
crossings or less and some links of 9 crossings  or less that are
not quasi-alternating. 
Also, we show that there are only finitely many Kanenobu knots which
are quasi-alternating. This last result supports Conjecture 3.1 of
Greene in \cite{G2} which states that there are only finitely many
quasi-alternating links with a given determinant. Moreover, we
identify an infinite family of non quasi-alternating Montesinos
links and this supports Conjecture 3.10 in \cite{QCQ} that
characterizes quasi-alternating Montesinos links.
\end{abstract}

\maketitle

\section{introduction}

Quasi-alternating links were first introduced by Ozsv$\acute{a}$th
and Szab$\acute{o}$ in \cite{OS}. This class of links appeared in
the context of link homology as a natural generalization of
alternating links. Quasi-alternating links are defined recursively
as follows:

\begin{defn}\label{def}
The set $\mathcal{Q}$ of quasi-alternating links is the smallest set
satisfying the following properties:
\begin{itemize}
    \item The unknot belongs to $\mathcal{Q}$.
  \item If $L$ is a link with a diagram $D$ containing a crossing $c$ such that
\begin{enumerate}
\item both smoothings of the diagram $D$ at the crossing $c$, $L_{0}$ and $L_{\infty}$
as in figure \ref{figure} belong to $\mathcal{Q}$, and
\item $\det(L_{0}), \det(L_{\infty}) \geq 1$,
\item $\det(L) = \det(L_{0}) + \det(L_{\infty})$; then $L$ is in $\mathcal{Q}$
and in this case we say that  $L$ is quasi-alternating at the crossing $c$
with quasi-alternating diagram $D$.
\end{enumerate}
\end{itemize}
\end{defn}

\begin{figure} [h]
\begin{center}
\includegraphics[width=10cm,height=2cm]{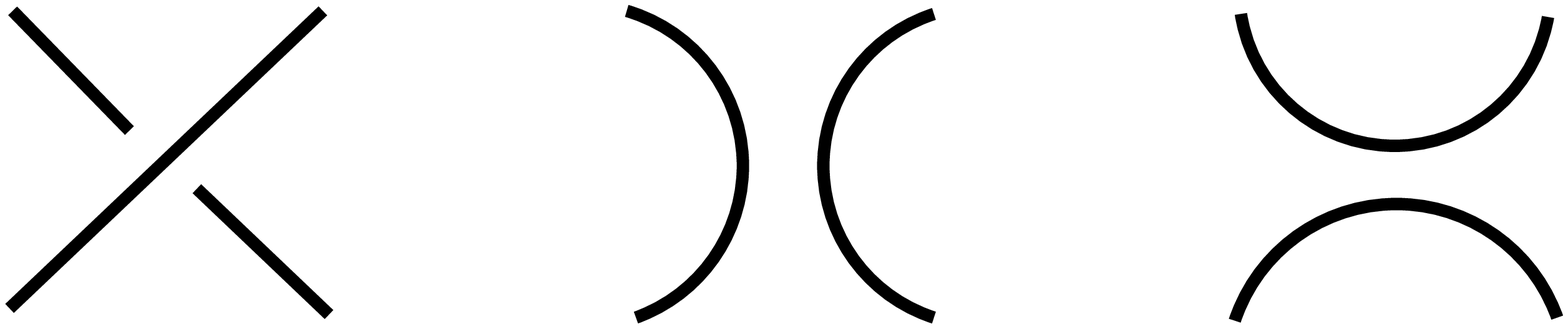} \\
{$L$}\hspace{3.5cm}{$L_0$}\hspace{3.5cm}$L_{\infty}$
\end{center}
\caption{The link diagram $L$ at crossing $c$  and its smoothings $L_{0}$ and $L_{\infty}$
respectively.}\label{figure}
\end{figure}

Here is a list of properties of alternating links that hold for
quasi-alternating links as well. These are actually the main
obstruction criteria that have been used to study
quasi-alternateness of links.\\

\begin{enumerate}
\item the branched double-cover of any quasi-alternating link
is an $L$-space \cite[Proposition\,3.3]{OS};
\item the space of branched double-cover of any
quasi-alternating link bounds a negative definite $4$-manifold $W$
with $H_{1}(W) = 0$ \cite[Proof of Lemma\,3.6]{OS};
\item the $\mathbb{Z}/2\mathbb{Z}$ knot Floer homology group of any
quasi-alternating link is thin \cite[Theorem\,2]{MO};
\item the reduced ordinary Khovanov homology group of any
quasi-alternating link is thin \cite[Theorem\,1]{MO};
 \item the reduced odd Khovanov homology group of any
quasi-alternating link is thin \cite[Remark after
Proposition\,5.2]{ORS}.
\end{enumerate}

The recursive nature of the definition of the set $\mathcal{Q}$
makes it hard to decide whether a given link is quasi-alternating by
the only use of the  definition. A different approach for the study
of  this  class of links is to find   algebraic  properties that
characterize quasi-alternating links. This is actually  the main
motivation of this paper. We prove  that the degree of the
Brandt-Lickorish-Millet  polynomial $Q_L(x)$ of any
quasi-alternating link $L$ is less than its determinant. Hence, we
obtain a new property of quasi-alternating links which is in other
words a new  and simple obstruction criterion for
quasi-alternateness.
\begin{thm}\label{main}
For any quasi-alternating link $L$, we have $\deg Q_{L}  < \det(L)$.
\end{thm}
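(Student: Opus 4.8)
The plan is to avoid working with the defining skein relation of $Q_L$, namely $Q_{L_+}+Q_{L_-}=x(Q_{L_0}+Q_{L_\infty})$, directly. The trouble with a naive induction along the recursion of Definition~\ref{def} is that this relation also involves the crossing change $L_-$, which need not be quasi-alternating and hence is not controlled by the induction hypothesis; there is no obvious way to separate $\deg Q_L$ from $\deg Q_{L_-}$ in the sum. Instead I would decouple the inequality into two independent ingredients: a \emph{universal} upper bound on $\deg Q_L$ coming from the number of crossings of a diagram, and an inductive upper bound on the crossing number of a quasi-alternating diagram in terms of the determinant.

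First I would invoke the classical degree estimate, due to Kidwell, that for any connected diagram $D$ of a link $L$ having $c(D)\geq 1$ crossings one has $\deg Q_L\leq c(D)-1$. Since $\deg Q_L$ is a link invariant, this bound holds for \emph{every} diagram of $L$; therefore it suffices to exhibit a single connected diagram of $L$ whose crossing number is at most $\det(L)$.

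The heart of the argument is then the following claim, proved by induction on the number of crossings: every quasi-alternating diagram $D$ of a link $L$ satisfies $c(D)\leq\det(L)$. The unknot, drawn with no crossings, gives the base case $0\leq 1$. For the inductive step, suppose $D$ is quasi-alternating at a crossing $c$. By Definition~\ref{def}, its two smoothings $D_0$ and $D_\infty$ are quasi-alternating diagrams of the quasi-alternating links $L_0$ and $L_\infty$, and each has exactly one fewer crossing, so that $c(D)=c(D_0)+1$. Applying the induction hypothesis to the smaller diagram $D_0$ gives $c(D_0)\leq\det(L_0)$, and using condition $(2)$ in the form $\det(L_\infty)\geq 1$ together with the additivity condition $(3)$ I obtain
\[
c(D)=c(D_0)+1\leq \det(L_0)+1\leq \det(L_0)+\det(L_\infty)=\det(L),
\]
which completes the induction. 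Note that here it is essential that the smoothings come equipped with their own quasi-alternating structure, so the hypothesis applies to $D_0$ itself rather than merely to some other diagram of $L_0$.

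Finally I would combine the two steps. If $L$ is the unknot then $\deg Q_L=0<1=\det(L)$. Otherwise the recursion forces $\det(L)=\det(L_0)+\det(L_\infty)\geq 2$, so $L$ is non-split and its quasi-alternating diagram $D$ is connected with $c(D)\geq 1$; then $\deg Q_L\leq c(D)-1\leq \det(L)-1<\det(L)$. I expect the only real subtleties to be bookkeeping ones: confirming that the crossing-number estimate is applied to a \emph{connected} diagram, which quasi-alternateness guarantees through non-splitness, and confirming that the sharp form $c(D)-1$ (rather than $c(D)$) of Kidwell's bound is available, since it is precisely this $-1$ that upgrades the inequality to the strict bound $\deg Q_L<\det(L)$. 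The genuine conceptual step is recognizing that the crossing-change obstruction in the skein relation can be bypassed entirely by routing the estimate through the crossing number.
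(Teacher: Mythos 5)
Your first ingredient is fine as a quotation of known results: Kidwell's bound $\deg Q_L \leq c(D)-1$ for a connected diagram with at least one crossing is a genuine theorem, and combined with a bound of the form $c(D)\leq \det(L)$ it would indeed give the statement. The fatal gap is in your second ingredient. Definition~\ref{def} asserts that the smoothed \emph{links} $L_0$ and $L_\infty$ belong to $\mathcal{Q}$; it does \emph{not} assert that the smoothed \emph{diagrams} $D_0$ and $D_\infty$ are quasi-alternating diagrams. The quasi-alternating structure guaranteed for $L_0$ lives on some other diagram of $L_0$, which bears no controlled relation to $D_0$ (in particular, its crossing number can be arbitrary), so your induction hypothesis cannot be applied to $D_0$. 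The sentence in which you call this point ``essential'' is exactly where the definition fails to deliver what you need.

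Worse, the claim you are inducting on is false. Take the standard $2$-crossing diagram of the Hopf link, which is quasi-alternating at either crossing, and perform a Reidemeister II move away from the distinguished crossing $c$. The resulting $4$-crossing diagram $D$ is still a quasi-alternating diagram: both smoothings at $c$ are diagrams of the unknot, which lies in $\mathcal{Q}$ with determinant $1$, and the determinant of the Hopf link is $2=1+1$. But $c(D)=4>2=\det(L)$, and further padding gives quasi-alternating diagrams of arbitrarily large crossing number, so no inequality $c(D)\leq\det(L)$ can hold at the level of diagrams. Even the corresponding statement for the minimal crossing number, $c(L)\leq\det(L)$, is precisely the conjecture of \cite{QQJ} discussed in this paper, open in general and verified here only for closed $3$-braids --- a strong hint that no short induction of this kind can establish it. The paper's route is genuinely different and never needs diagrams of the smoothings: Lemma~\ref{simple} proves $\deg Q_{L}\leq\max\{\deg Q_{L_0},\deg Q_{L_\infty}\}+1$ for an \emph{arbitrary} link, disposing of the troublesome $Q_{L_-}$ term in the skein relation by an auxiliary induction on the number of crossing changes needed to reach an unlink; Theorem~\ref{main} then follows by induction on $\det(L)$, using only the recursive properties of links in $\mathcal{Q}$ (additivity of the determinant and the fact that the unknot is the only quasi-alternating link of determinant $1$).
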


Consequently, we provide a table of  knots with up to 12 crossings,
and a table of links with up to 9 crossings which are not
quasi-alternating. Also, we show that there are only finitely many
Kanenobu knots that are quasi-alternating. This gives an easier
proof of one of the claims of \cite[Theorem\,2]{GW} and supports
Conjecture 3.1 of Greene in \cite{G2} since all Kanenobu knots have
equal determinant. In addition, we identify an infinite family of
non quasi-alternating Montesinos links. The later result supports
Conjecture 3.10 in \cite{QCQ} that characterizes quasi-alternating
Montesinos links.


\section{Proof and Applications}






In 1984, Jones introduced  a new polynomial  $V_L(t)$ which is an
invariant of ambient isotopy of oriented links in the three-sphere.
The Jones polynomials can be defined recursively by the following
relations:
$$\begin{array}{l}
V_U(t)=1\\
tV_{L_+}(t) - t^{-1} V_{L_-}(t)= (\sqrt{t}+\displaystyle\frac{1}{\sqrt{t}}) V_{L_0}(t),
\end{array}$$
where $U$ is the unknot and  $L_{+}$, $L_{-}$ and  $L_{0}$ are three links
which are identical except in a small ball where they are as pictured below:

\begin{figure}[h]
\centering
\includegraphics[width=10cm,height=2.5cm]{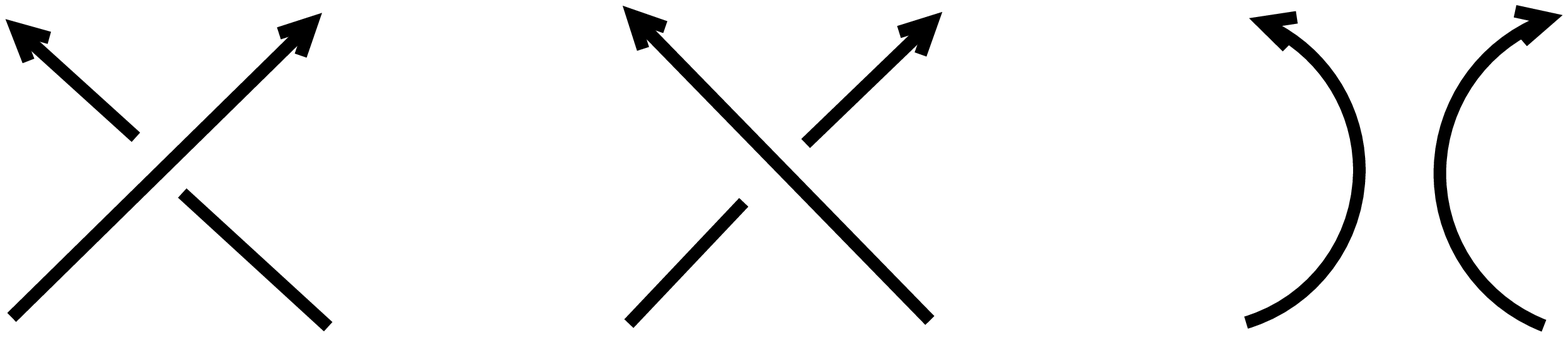}
\caption{ $L_+$, $L_{-}$ and  $L_0$,  respectively.}
\end{figure}

Shortly after the discovery of the Jones polynomial, Brandt,
Lickorish and Millet in \cite{BLM} introduced a new link invariant
$Q_L(x)$. For any link $L$, $Q_L(x)$ is a Laurent polynomial which
can be  defined by $Q_U(x)=1$ and a recursive
relation on link diagrams as follows:\\

$$\begin{array}{l}
Q_{L_+}(x) +  Q_{L_-}(x)= x (Q_{L_0}(x) +  Q_{L_\infty}(x))
\end{array}$$
where  $L_{+}$, $L_{-}$,  $L_{0}$ and $L_\infty$  are four
links which are identical except in a small ball where they are as in the following picture
\begin{figure}[h]
\centering
\includegraphics[width=12cm,height=2.5cm]{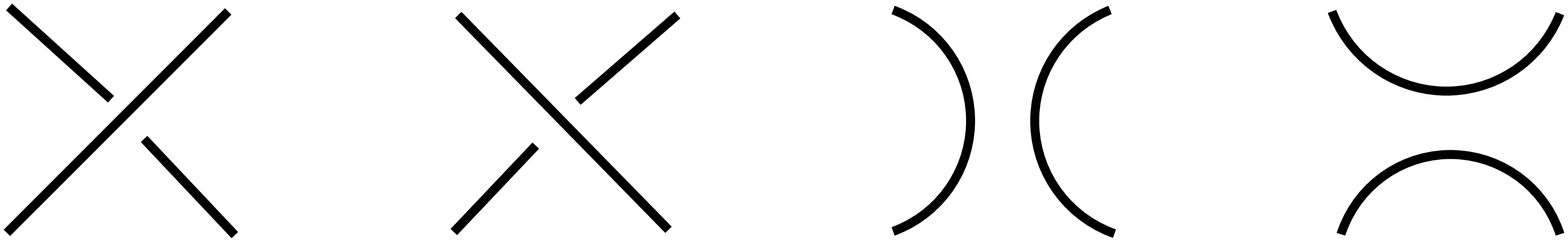}
\caption{$L_+$, $L_-$, $L_0$ and $L_{\infty}$, respectively.}
\label{figure8}
\end{figure}

It is worth mentioning that the  $Q$-polynomial is a specialization of the well known two-variable Kauffman
polynomial $F$, \cite{BLM}. More precisely, for any link $L$, we have $ Q_{L}(x) = F_{L}(1, x)$.
In the remaining part of the paper, $\deg (Q)$ refers to the highest power of $x$ that appears
in $Q(x)$. It is well known that   $\deg (Q)$ is always positive.
Here are some properties of the $Q-$polynomial.\\
\begin{prop}[\cite{BLM}]
The $Q$-polynomial satisfies the following:
\begin{enumerate}
 \item if $L$ is a $k-$component link, then the lowest power  of $x$ that appears in $Q$ is $1-k$.
     \item $Q(L_1 \# L_2)$ = $Q(L_1)$ $Q(L_2)$, where $L_1 \# L_2$ is the connected sum of $L_1$ and $L_2$.
    \item $Q(L')$ = $Q(L)$, where $L'$ is the mirror image of $L$.
\end{enumerate}
\end{prop}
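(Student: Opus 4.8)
The plan is to treat the three assertions in the order that exploits the recursive definition most economically, using throughout that the two relations $Q_U=1$ and $Q_{L_+}+Q_{L_-}=x(Q_{L_0}+Q_{L_\infty})$ determine $Q$ uniquely on all links. The backbone of each part is a double induction: a primary induction on the number of crossings $n$ of a diagram, and, since a crossing change $L_+\leftrightarrow L_-$ does not lower $n$, a secondary induction on the number of crossings that must be switched to turn the diagram into a descending diagram, whose underlying link is the $k$-component unlink $U_k$.

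I would first record the computation that feeds all three parts, namely the effect of a split unknot. Applying the skein relation at the crossing of a Reidemeister~I kink, the two crossed diagrams are both isotopic to the unkinked link $L$, one smoothing returns $L$ and the other returns $L\sqcup U$; solving gives $Q_{L\sqcup U}=(2x^{-1}-1)Q_L$, hence $Q_{U_k}=(2x^{-1}-1)^{k-1}$, whose lowest power of $x$ is $x^{1-k}$. For part~(2) I would fix a diagram of $L_1$ and induct on the crossings of $L_2$ in a connected-sum diagram $L_1\#L_2$: every crossing $c$ of $L_2$ has all four modifications $L_1\#(L_2)_{\pm}$, $L_1\#(L_2)_0$, $L_1\#(L_2)_\infty$ supported inside the $L_2$ summand, so the skein relation for $L_1\#L_2$ is the relation for $L_2$ multiplied through by $Q_{L_1}$; comparing the two and using $Q_{L_1\#U}=Q_{L_1}$ at the base yields $Q_{L_1\#L_2}=Q_{L_1}Q_{L_2}$.

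Part~(3) I would obtain for free from uniqueness. Set $\widetilde Q_L:=Q_{L'}$. Taking the mirror image reflects a diagram through the projection plane, which exchanges $L_+$ with $L_-$ at every crossing while fixing each of the two crossingless smoothings $L_0,L_\infty$; since the skein relation is symmetric in $L_+$ and $L_-$, the quadruple $L_+',L_-',L_0',L_\infty'$ again satisfies it, and $\widetilde Q_U=Q_{U'}=Q_U=1$. Thus $\widetilde Q$ obeys both defining relations, so $\widetilde Q=Q$, that is, $Q_{L'}=Q_L$.

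The substance is part~(1), which I would prove as the single inductive statement that for a $k$-component diagram $Q_L\in x^{1-k}\mathbb{Z}[x]$ and the coefficient $b(L)$ of $x^{1-k}$ has $2$-adic valuation exactly $k-1$, so that $b(L)\neq 0$. The combinatorial engine is the behaviour of components under smoothing: joining the four ends at $c$ outside a neighbourhood of $c$ by a matching $E$ and counting the cycles of $E$ together with each of the two non-crossing matchings, an elementary enumeration shows that $L_0$ and $L_\infty$ have component numbers either $\{k,k+1\}$ (generic case) or $\{k-1,k-1\}$ (when $E$ coincides with the strand-connectivity at $c$). In all cases both smoothings carry at most $k+1$ components, so after multiplying by $x$ their contributions start no lower than $x^{1-k}$, and together with the $Q_{L_-}$ term this gives $Q_L\in x^{1-k}\mathbb{Z}[x]$. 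The main obstacle is the non-cancellation of the bottom coefficient: reading the coefficient of $x^{1-k}$ off the skein relation gives $b(L_+)+b(L_-)=b(S)$ in the generic case, where $S$ is the $(k+1)$-component smoothing, and $b(L_+)+b(L_-)=0$ in the separating case, so $b(L_+)=b(S)-b(L_-)$ could a priori vanish. This is resolved precisely by the $2$-adic refinement: by induction $v_2(b(L_-))=k-1$ while $v_2(b(S))=k$ since $S$ has $k+1$ components, so the valuations differ and $v_2(b(L_+))=\min(k-1,k)=k-1$; in the separating case $b(L_+)=-b(L_-)$ preserves the valuation outright. With base case $b(U_k)=2^{k-1}$ of valuation $k-1$, the induction closes and the lowest power occurring in $Q_L$ is exactly $x^{1-k}$.
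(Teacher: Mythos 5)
The paper does not actually prove this proposition --- it is imported wholesale from \cite{BLM} with a citation and no argument --- so there is no in-paper proof to compare against; your proposal must stand on its own, and it does. Parts (2) and (3) are handled by the standard devices (a skein induction localized in the $D_2$ summand, and the uniqueness of the invariant determined by $Q_U=1$ and the skein relation, noting the relation's symmetry in $L_+,L_-$ and in $L_0,L_\infty$ --- the latter symmetry also quietly absorbs the fact that mirroring may relabel which smoothing is called $0$ and which $\infty$), and your preliminary computation $Q_{L\sqcup U}=(2x^{-1}-1)Q_L$ via a Reidemeister~I kink is the right base for both. The real content is part (1), where the naive induction genuinely fails because the bottom coefficients could cancel in $b(L_+)=b(S)-b(L_-)$; your $2$-adic refinement --- proving the sharper statement that the coefficient of $x^{1-k}$ has $2$-adic valuation exactly $k-1$, so that in the self-crossing case $v_2(b(S))=k\neq k-1=v_2(b(L_-))$ forces $v_2(b(L_+))=k-1$, while the mixed-crossing case gives $b(L_+)=-b(L_-)$ outright --- is exactly the kind of strengthening needed to close the induction, and your component-count dichotomy (self-crossing: smoothings have $\{k,k+1\}$ components; mixed crossing: both have $k-1$) is correct and makes all degree bookkeeping go through, with base case $b(U_k)=2^{k-1}$. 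Two minor points: the ``matching $E$'' phrasing for the component count is more opaque than the direct self-crossing/mixed-crossing case split it encodes; and in part (2) the inductive statement is cleaner if phrased diagrammatically (a diagram split into $D_1$ and $D_2$ by a simple closed curve meeting it twice), since smoothing a crossing of $D_2$ can alter its component structure while preserving that product shape. Neither affects correctness: the proof is sound, complete in outline, and in fact proves more than the cited statement, since the exact valuation $2^{k-1}\,\|\,b(L)$ is a strictly stronger conclusion than the nonvanishing of the coefficient of $x^{1-k}$.
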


The following lemma is the key step towards  the proof of the main result of this paper.
\begin{lemm}\label{simple}
Let  $L$ be a   link,  then
\[
\deg Q_{L} \leq  \max \{\deg Q_{L_{0}},  \deg Q_{L_{\infty}}\}+1,
\]
where $L_{0}, L_{\infty}$ are the smoothings of the link $L$ at any
crossing $c$.
\end{lemm}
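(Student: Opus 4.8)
The plan is to feed the one recursion we have, the $Q$-polynomial skein relation, into the distinguished crossing $c$. Orienting $L$ so that it plays the role of $L_+$ there, the relation reads $Q_{L} = x\bigl(Q_{L_0} + Q_{L_\infty}\bigr) - Q_{L^{*}}$, where $L^{*}$ denotes the diagram obtained from $L$ by switching $c$. The first term $x(Q_{L_0}+Q_{L_\infty})$ has degree at most $\max\{\deg Q_{L_0}, \deg Q_{L_\infty}\}+1$, which is exactly the bound claimed, so the whole statement collapses to the single inequality $\deg Q_{L^{*}} \le \max\{\deg Q_{L_0},\deg Q_{L_\infty}\}+1$. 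The essential observation is that switching $c$ leaves the two smoothings unchanged: $L^{*}$ has the very same $L_0$ and $L_\infty$ at $c$. Hence the lemma really asserts that \emph{both} over/under choices at $c$ obey the same degree bound, and it cannot be closed from the skein at $c$ alone, since that relation couples $Q_L$ and $Q_{L^{*}}$ symmetrically.

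To break this symmetry I would prove the statement for all diagrams at once, by induction on the number of crossings $n$, in the strengthened form: for every link diagram $D$ with $n$ crossings and every crossing $c$, one has $\deg Q_D \le \max\{\deg Q_{D_0},\deg Q_{D_\infty}\}+1$. Because this quantifies over all diagrams, it covers $D$ and its switch $D^{*}$ simultaneously, which is precisely what the skein step above needs. The base case $n=1$ is an unknot/unlink computation, where $\deg Q = 0$ and the bound is immediate. In the inductive step the real work is to control $Q_{D^{*}}$; here I would reduce $D^{*}$ to a descending diagram by a sequence of crossing changes performed only at crossings $c' \neq c$. A descending diagram represents an unlink, so by the Reidemeister invariance of $Q$ its degree is $0$; each crossing change along the way is accounted for by the skein relation at the corresponding $c'$, whose two smoothings have only $n-1$ crossings and are therefore governed by the inductive hypothesis.

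The main obstacle is exactly the bookkeeping in this reduction. After smoothing at an auxiliary crossing $c'$, the inductive hypothesis bounds the outcome in terms of \emph{its} smoothings at $c$, and one must then argue that these doubly-smoothed diagrams do not push the degree past the target $\max\{\deg Q_{D_0},\deg Q_{D_\infty}\}+1$. Making this airtight requires either a monotonicity input, namely that smoothing a crossing never raises $\deg Q$, or, more safely, a secondary induction on the number of ``ascending'' crossings relative to a fixed descending order, so that every crossing change strictly simplifies the diagram toward the unlink while the skein corrections remain within the allowed degree. I expect this interplay between the two inductions, on the total crossing number and on the descending defect, to be the delicate point.

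An alternative that partly sidesteps the issue is to invoke the known bound $\deg_z F_L \le c(L)-1$ on the $z$-degree of the Kauffman polynomial together with the specialization $Q_L(x)=F_L(1,x)$; this yields the global degree estimate directly, but extracting the sharp \emph{local} smoothing form stated in the lemma still forces one to track the leading terms through the skein relation at $c$, so I would keep the skein-based induction as the primary route.
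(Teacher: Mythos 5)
Your opening moves coincide with the paper's: apply the $Q$-skein relation at $c$, observe that the switched diagram $L^{*}$ has the \emph{same} smoothings $L_0,L_\infty$ at $c$, and reduce everything to bounding $\deg Q_{L^{*}}$. But your next assertion --- that the symmetry ``cannot be closed from the skein at $c$ alone'' --- is exactly where you diverge from the paper, and the machinery you erect to compensate is what leaves the gap. The paper breaks the symmetry not with auxiliary crossings but with the choice of induction parameter: it inducts on $n(D)$, the minimal number of crossing switches needed to turn the diagram $D$ into a diagram of an unlink. The switched diagram $L_-$ (your $L^{*}$) is then covered by the inductive hypothesis \emph{at the very same crossing} $c$, so it already satisfies $\deg Q_{L_-}\le \max\{\deg Q_{L_0},\deg Q_{L_\infty}\}+1$, and the skein relation at $c$ gives $\deg Q_{L}\le \max\{\deg (Q_{L_0}+Q_{L_\infty})+1,\ \deg Q_{L_-}\}$, which finishes the argument in two lines; the base case $n(D)=1$ is handled by $Q$ of the $k$-component unlink being $(2x^{-1}-1)^{k-1}$, whose degree $0$ cannot carry the top term. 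No crossing other than $c$ is ever switched, so no doubly-smoothed diagrams ever appear.

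Your route, by contrast, never closes, and you say so yourself: after smoothing at an auxiliary crossing $c'\neq c$, the inductive hypothesis controls the result only in terms of \emph{its} smoothings at $c$, and relating those doubly-smoothed diagrams back to $\deg Q_{L_0}$ and $\deg Q_{L_\infty}$ is left to either (i) a monotonicity claim (smoothing never raises $\deg Q$) or (ii) an unspecified secondary induction on the descending defect. Neither is supplied, and neither is routine: (i) is not an available property of $Q$ --- in the relation $Q_{L_+}+Q_{L_-}=x(Q_{L_0}+Q_{L_\infty})$ the leading terms of $Q_{L_0}$ and $Q_{L_\infty}$ may cancel, so nothing forbids a smoothing from having degree larger than both $L_+$ and $L_-$ --- and (ii) is the entire missing content of the proof, not a bookkeeping detail. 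So what you have is a plan whose critical step is openly unresolved, which is a genuine gap. To be fair, your suspicion is not baseless: the paper's induction tacitly assumes that switching the given crossing $c$ decreases $n(D)$, which is not automatic for an arbitrary crossing; but the natural repair is to refine the induction measure so that one of the pair $D$, $D^{*}$ is strictly simpler (the bound then transfers to the other through the skein relation at $c$), rather than to unknot $D^{*}$ by switching \emph{other} crossings --- it is precisely that detour which manufactures the doubly-smoothed diagrams your argument cannot control.
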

\begin{proof}

For any link diagram $D$, we define $n(D)$ to be  the minimum number of crossing switches
\includegraphics[width=1cm]{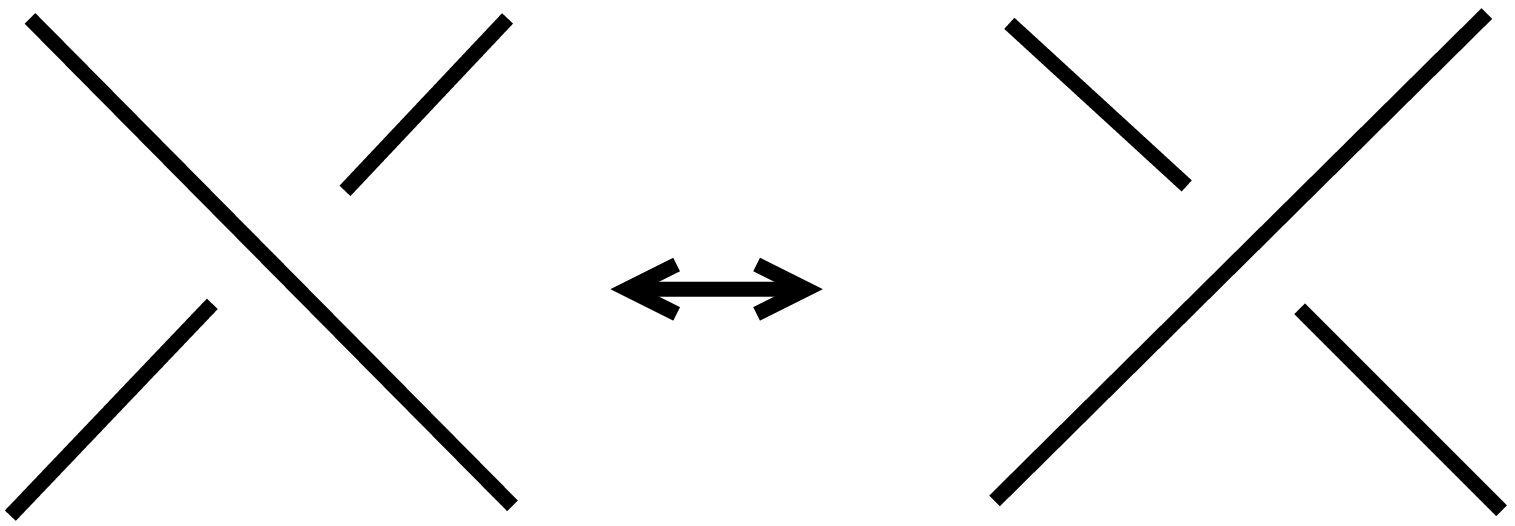} necessary to transform $D$ into a diagram
of the unlink. We prove the lemma by induction on  $n=n(D)$. First,
notice that we can assume without loss of generality that $D=D_+$.
If  $n = 1$, then there is a crossing in $D$ whose change produces a
diagram $D_{-}$ of  the unlink with $k$ components $L_{-}$. Since
$Q_{L_{-}}(x)=(2x^{-1}-1)^{k-1}$, then the skein relation writes: $
Q_{L}(x) = x (Q_{L_0}(x) + Q_{L_\infty}(x))-(2x^{-1}-1)^{k-1}$.
Using the fact that the degree of  $Q_{L}$ is always positive, we
can see that the term $(2x^{-1}-1)^{k-1}$ will not contribute to the
coefficient of the highest degree of $Q_{L}$. Consequently, $\deg
Q_{L} \leq \deg(Q_{L_0}+Q_{L_{\infty}})+1 \leq \max  \{\deg
Q_{L_{0}},  \deg Q_{L_{\infty}}\}+1$.

Now, let $L$ be a link having a diagram $D$  whose number of crossing switches is $n(D)=n$.
Assume  the result is true for all link diagrams  with a crossing switches less
than  $n$, in particular for the link $L_{-}$.  Since
$
Q_{L}(x) = x (Q_{L_0}(x) + Q_{L_\infty}(x))-Q_{L_-}(x)
 $.
Then, $\deg Q_{L}(x) \leq \max \{\deg(Q_{L_0}+Q_{L_{\infty}})+1,
\deg Q_{L_{-}}\} \leq \max \{ \deg Q_{L_0}, \deg Q_{L_{\infty}}
\}+1.$ 

\end{proof}

\begin{proof}[\textbf{Proof of Theorem 1.2}]
We use induction on the determinant of the given quasi-alternating
link $L$. The result is obvious if $\det(L)
= 1$ since  the only link  that
is quasi-alternating with determinant 1 is the unknot. Now assume that the result
is true for all quasi-alternating links with determinant less than
or equal to $m$. If $L$ is a quasi-alternating link with determinant
$m+1$, then both  $\det(L_{0})$ and $\det(L_{\infty})$ are less than or equal  to  $m$. By the induction assumption
 $\deg Q_{L_{0}} <
\det(L_{0})$ and $\deg Q_{L_{\infty}} < \det(L_{\infty})$. Consequently:
$$\begin{array}{ll}
\deg Q_{L} &\leq  \max \{\deg Q_{L_{0}},  \deg Q_{L_{\infty}}\}+1 \\
& <  \max \{\det(L_{0}),  \det{L_{\infty}}\}+1\\
& < \det(L_{0})+\det(L_{\infty})\\
&< \det(L).
\end{array}
$$
\end{proof}

In \cite{K}, Kanenobu  introduced  an infinite family of knots
$K(p,q)$, where  $p,q$ are two integers.  All these knots are known
to have determinant equal to 25. Our idea is to  apply  Theorem
\ref{main}  to study the quasi-alternateness of Kanenobu knots. The
$Q$-polynomial of any Kanenobu knot is given by the following
proposition:

\begin{prop}
[Proposition 4.5, \cite{K}] Let $Q(a,b)$ be the $Q$-polynomial of
the Kanenobu knot $K(a,b)$, then we have
\begin{align*}
Q(a,b) = -\sigma_{a}\sigma_{b}(Q(8_{9})-1) +
x^{-1}(\sigma_{a+1}\sigma_{b+1} +
\sigma_{a-1}\sigma_{b-1})(Q(8_{8})-1) + 1,
\end{align*}
where $Q(8_{8}) = 1 + 4x + 6x^{2} - 10x^{3} - 14x^{4} +4x^{5}
+8x^{6} +2x^{7}$ and $Q(8_{9}) = -7 + 4x + 16x^{2} - 10x^{3} -
16x^{4} +4x^{5} +8x^{6} +2x^{7}$.
\end{prop}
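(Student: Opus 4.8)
The plan is to treat $Q$ as a quantity that depends essentially bilinearly on the two twist regions of the Kanenobu knot. Since the paper already records that $Q_L=F_L(1,x)$, the polynomial $Q$ obeys the unoriented skein relation $Q_{L_+}+Q_{L_-}=x(Q_{L_0}+Q_{L_\infty})$ and is insensitive to curls. I would fix a diagram of $K(a,b)$ in which $a$ and $b$ count the twists in two disjoint vertical twist regions inserted into an otherwise fixed tangle template. The key observation is that, by iterating the skein relation inside a single twist region, the contribution of the $n$-twist region reduces to a $\mathbb{Z}[x^{\pm1}]$-combination of the $Q$-polynomials of a few fixed diagrams obtained by replacing that region by a crossingless tangle, with coefficients that are Chebyshev-type polynomials in $x$.

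More precisely, resolving one crossing of the region carrying $n$ twists relates the diagram with $n$ twists to those with $n-1$ and $n-2$ twists and to a fixed capped-off diagram. This produces a second-order linear recursion whose homogeneous part is $\sigma_{n+1}=x\sigma_n-\sigma_{n-1}$, solved by the polynomials $\sigma_n$, together with a particular solution coming from the constant capping term and the unlink contribution $(2x^{-1}-1)^{k-1}$ appearing in the proof of Lemma~\ref{simple}. Carrying this out in the $a$-slot writes $Q(a,b)$ in terms of $\sigma_a$ and $\sigma_{a\pm1}$ with coefficients depending only on $b$; repeating it in the $b$-slot yields exactly the bilinear shape $\sigma_a\sigma_b$ and $\sigma_{a\pm1}\sigma_{b\pm1}$ recorded in the statement.

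It then remains to identify the coefficient knots. Filling the two slots with the crossingless $0$- and $\infty$-tangles produces four ``corner'' diagrams, which I would read off the template and check that two of them are (mirror images of) the knots $8_8$ and $8_9$, while the degenerate corners are unknots or unlinks. Their $Q$-polynomials, namely $Q(8_8)$ and $Q(8_9)$, then enter paired with the Chebyshev coefficients, and the trivial corners contribute the constant terms. Subtracting the unknot value $Q_U=1$ from each corner is precisely what produces the normalized factors $Q(8_9)-1$ and $Q(8_8)-1$ and the final additive $+1$.

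The main obstacle, and the part demanding genuine care rather than routine computation, is the bookkeeping of constants. One must set up the twist recursion with exactly the right variable and initial conditions so that precisely the shifts $\sigma_{a\pm1}\sigma_{b\pm1}$ arise and not some other combination, match each corner diagram to $8_8$, $8_9$, or a trivial link with the correct mirror and orientation conventions, and track the $(2x^{-1}-1)^{k-1}$ terms so that they collapse to the clean scalars $\mp 1$. I would remove any surviving scalar ambiguity by evaluating the proposed formula at a few base cases such as $K(0,0)$ and $K(\pm1,0)$, whose knot types and $Q$-polynomials are directly computable, and then conclude for all $(a,b)$ by induction via the twist recursion.
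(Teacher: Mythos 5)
First, a point of comparison: the paper itself contains \emph{no} proof of this statement --- it is quoted verbatim as Proposition~4.5 of Kanenobu \cite{K}, so the only meaningful benchmark is Kanenobu's original derivation, which is indeed the kind of skein-theoretic twist-recursion computation you outline. Your architecture is sound: applying $Q_{L_+}+Q_{L_-}=x(Q_{L_0}+Q_{L_\infty})$ at a crossing of the $a$-slot gives $Q(a,b)=xQ(a-1,b)-Q(a-2,b)+xQ_{\mathrm{cap}}(b)$, where the capped diagram is a two-component unlink independently of $b$, so that $xQ_{\mathrm{cap}}=x(2x^{-1}-1)=2-x$; consequently $Q(a,b)-1$ satisfies the \emph{homogeneous} Chebyshev recursion in $a$ (and, symmetrically, in $b$), hence is a bilinear combination of $\sigma$'s. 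This is exactly the mechanism that produces the factors $Q(\cdot)-1$ and the trailing $+1$, as you anticipated.

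However, two concrete steps of your plan are wrong or underdetermined as stated. First, the ``four corner diagrams'' obtained by filling the slots with crossingless tangles are \emph{not} $8_8$ and $8_9$: the $(0,0)$-corner is $K(0,0)=4_1\# 4_1$ (consistent with the formula, which specializes to $Q(0,0)=2x^{-1}(Q(8_8)-1)+1=(Q_{4_1})^2$), and the $\infty$-corners are two-component unlinks, which is precisely what produces the constant terms. The knots $8_8$ and $8_9$ enter instead as the family members $K(1,0)$ and $K(1,-1)$; indeed the formula gives $Q(1,0)=Q(8_8)$ and $Q(1,-1)=Q(8_9)$. Second, and as a consequence, your proposed base cases $K(0,0)$, $K(\pm 1,0)$ cannot pin down the answer: they all lie on the single line $b=0$, and the value at $(-1,0)$ is already forced by the recursion from the values at $(0,0)$ and $(1,0)$, so it carries no new information. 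A solution of a second-order homogeneous recursion in each of two variables requires initial data on a full $2\times 2$ grid (or its equivalent), so you must identify at least one family member off that line --- in practice $K(1,-1)=8_9$ --- or else the coefficient $-(Q(8_9)-1)$ of $\sigma_a\sigma_b$ is simply not determined by your data. With these two repairs (corners are $4_1\# 4_1$ and unlinks; base points $K(0,0)$, $K(1,0)=8_8$, $K(1,-1)=8_9$, together with the symmetry $K(a,b)=K(b,a)$), your induction closes and reproduces the stated formula.
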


In the proposition above, $\sigma_{n}$ is defined as follows:

\[ \sigma_{n} =
  \begin{cases}
  \frac{\alpha^{n} - \beta^{n}}{\alpha - \beta}, & \text{if}\  n > 0,
     \\
        0, & \text{if} \ n = 0,
     \\
  -\frac{\alpha^{-n} - \beta^{-n}}{\alpha - \beta}, & \text{if} \ n < 0,
  \end{cases}
\]
where $\alpha + \beta = x$ and $\alpha\beta = 1$.

The degree of the $Q$-polynomial of any Kanenobu knot is given in
the following proposition:

\begin{prop}\label{degree}

For the Kanenobu knot $K(p, q)$, we have
\[ \deg Q(p, q) =
  \begin{cases}
  |p|+|q|+6, & \text{if} \ pq \ \geq 0,
     \\
        |p|+|q|+5, & \text{otherwise},
  \end{cases}
\]
\end{prop}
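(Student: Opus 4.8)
The plan is to compute the degree directly from the closed formula for $Q(a,b)$ given in the previous proposition. The whole statement reduces to understanding the degree of the polynomial
$$Q(a,b) = -\sigma_{a}\sigma_{b}(Q(8_9)-1) + x^{-1}(\sigma_{a+1}\sigma_{b+1}+\sigma_{a-1}\sigma_{b-1})(Q(8_8)-1)+1.$$
The factors $Q(8_8)-1$ and $Q(8_9)-1$ are explicit degree-$7$ polynomials with nonzero leading term $2x^7$, so the $x$-degree contributed by each of the two summands is governed entirely by the degrees of the products of $\sigma$'s. Hence the main task is to pin down $\deg(\sigma_a\sigma_b)$ and $\deg(\sigma_{a+1}\sigma_{b+1}+\sigma_{a-1}\sigma_{b-1})$, and then check for cancellation of the top-degree terms between the two summands.

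Let me have a few lines on the degree of $\sigma_n$. Since $\alpha+\beta=x$ and $\alpha\beta=1$, the $\sigma_n$ are (up to sign and shift) Chebyshev-like polynomials satisfying $\sigma_{n+1}=x\sigma_n-\sigma_{n-1}$, with $\sigma_0=0$ and $\sigma_1=1$. A straightforward induction shows that for $n>0$ one has $\deg\sigma_n=n-1$ with leading coefficient $1$, and by the definition of $\sigma_n$ for $n<0$ one gets $\deg\sigma_n=|n|-1$ as well; in all cases $\deg\sigma_n=|n|-1$ for $n\neq 0$. Consequently $\deg(\sigma_a\sigma_b)=|a|+|b|-2$ and, before accounting for cancellation, the first summand has degree $(|a|+|b|-2)+7=|a|+|b|+5$, while the second summand, after multiplying by $x^{-1}$, has degree at most $(|a|+|b|)-2+7-1=|a|+|b|+4$, coming from the dominant term $\sigma_{a+1}\sigma_{b+1}$ of degree $|a+1|+|b+1|-2$.

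First I would treat the case $pq\ge 0$: here $a=p$ and $b=q$ have the same sign (or one is zero), so $|a+1|+|b+1|$ and $|a-1|+|b-1|$ behave predictably and the top-degree contributions come unambiguously from the first summand, giving $\deg Q(p,q)=|p|+|q|+5$? — this is where care is required, because the leading coefficient of the first summand is $-1\cdot 1\cdot 2=-2\neq 0$, so there is no cancellation and one must double-check whether the stated answer $|p|+|q|+6$ forces a degree shift. The subtlety is that the two summands live on different parities of the $\sigma$-index shift, so the apparent leading term of the first summand may in fact survive or be augmented; I expect the honest bookkeeping of which of $|a\pm 1|$ equals $|a|+1$ versus $|a|-1$ to produce the $+6$ versus $+5$ dichotomy.

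The hard part will be the sign-analysis and cancellation check in the mixed case $pq<0$, where $a$ and $b$ have opposite signs. There, the identities $|a+1|$ and $|a-1|$ no longer both equal $|a|\pm 1$ in the naive way, and the two terms $\sigma_{a+1}\sigma_{b+1}$ and $\sigma_{a-1}\sigma_{b-1}$ can reach the same top degree with opposite-sign leading coefficients, producing a genuine cancellation that drops the degree by one and accounts for the $|p|+|q|+5$ answer in the $pq<0$ branch (versus $|p|+|q|+6$ when $pq\ge 0$). I would organize this by writing $\sigma_n=\tfrac{\alpha^n-\beta^n}{\alpha-\beta}$ uniformly (valid for all signs after the definition) so that products $\sigma_a\sigma_b$ expand into sums of $\alpha^{a\pm b}$-type terms, making the leading-term cancellations transparent; the bulk of the remaining work is then routine leading-coefficient arithmetic, which I would not carry out in full here.
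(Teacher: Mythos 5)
Your overall strategy --- reading the degree off the closed formula for $Q(a,b)$ via $\deg\sigma_n=|n|-1$ --- is the same as the paper's, but your degree bookkeeping for the second summand contains an error that breaks the argument at the decisive point. You claim that $x^{-1}(\sigma_{a+1}\sigma_{b+1}+\sigma_{a-1}\sigma_{b-1})(Q(8_8)-1)$ has degree at most $(|a|+|b|)-2+7-1=|a|+|b|+4$, ``coming from the dominant term $\sigma_{a+1}\sigma_{b+1}$ of degree $|a+1|+|b+1|-2$''. But when $a$ and $b$ have the same sign, $|a+1|+|b+1|-2$ is \emph{not} $|a|+|b|-2$: for $a,b>0$ it equals $|a|+|b|$ (and for $a,b<0$ it is $\sigma_{a-1}\sigma_{b-1}$ that has degree $|a-1|+|b-1|-2=|a|+|b|$). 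Hence in the case $pq\geq 0$ the second summand has degree $|a|+|b|-1+7=|a|+|b|+6$ and strictly dominates the first summand, whose degree is $|a|+|b|+5$; this is precisely where the $+6$ in the statement comes from. Your accounting instead makes the first summand dominant and yields $\deg Q(p,q)=|p|+|q|+5$ for $pq\geq 0$; you notice that this contradicts the statement (``this is where care is required\dots'') but never resolve the tension, so the main case of the proposition is left unproved.

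The mechanism you propose for the mixed case $pq<0$ is also not the right one. Take $a>1$ and $b<-1$. Then $\sigma_{a+1}\sigma_{b+1}$ and $\sigma_{a-1}\sigma_{b-1}$ do have the same degree $|a|+|b|-2$, but their leading coefficients are both $-1$ (since $\sigma_n$ has leading coefficient $+1$ for $n>0$ and $-1$ for $n<0$), so the two terms reinforce rather than cancel. No cancellation is needed anywhere: the point is simply that for opposite signs $|a+1|+|b+1|=|a-1|+|b-1|=|a|+|b|$, so the second summand has degree $|a|+|b|-2-1+7=|a|+|b|+4$, and now it is the first summand, of degree $|a|+|b|+5$ and leading coefficient $+2$, that dominates, giving $|p|+|q|+5$. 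Once you correct the absolute-value arithmetic (and treat the degenerate cases where $\sigma_0=0$ enters, i.e.\ $a\in\{0,\pm 1\}$ or $b\in\{0,\pm 1\}$), the rest is indeed routine leading-coefficient bookkeeping --- essentially the computation the paper performs after identifying the $\sigma_n$ with Chebyshev polynomials.
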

\begin{proof}
We claim that $\sigma_0=S_{-1}$ and  $\sigma_{n} =
\frac{n}{|n|}S_{|n|-1}(x)$  if  $n\neq 0$, where $S_{k}(x)$ is
$k-th$ Chebyshev polynomial of the first kind which is defined
inductively by $S_{-1}(x) = 0, S_{0}(x) = 1$ and $S_{k}(x) =
xS_{k-1}(x) - S_{k-2}(x)$. The  claim is obvious  for $n=0$ and
$n=1$. Now we prove that  $\sigma_{k}$ satisfies  the same inductive
relation as the Chebyshev polynomial.  Since $\alpha+\beta=x$ and
$\alpha \beta=1$, then for all $k\geq 1$, we have:
\begin{align*}
x \sigma_{k} =& (\alpha + \beta)\frac{\alpha^k-\beta^k}{\alpha-\beta}\\
=& \frac{\alpha^{k+1}-\beta^{k+1}+\beta \alpha^k-\alpha \beta^k}{\alpha-\beta}\\
=& \frac{\alpha^k-\beta^k}{\alpha-\beta}+ \frac{\alpha^{k-1}-\beta^{k-1}}{\alpha-\beta}\\
=& \sigma_{k+1}+\sigma_{k-1}.
\end{align*}

Now the claim follows for any $k$ since  $\sigma_{n} = -\sigma_{-n}$
for $n < 0$.

Finally, with the convention  $\deg 0 = -1$, we conclude that  $\deg
Q(p,q) = \deg (\sigma_{|p| +1}\sigma_{|q|+1}) + \deg Q(8_{8}) - 1 =
|p| + |q| + 7 - 1 = |p| + |q| + 6$ for $pq \geq 0$ and $\deg Q(p,q)
= \deg (\sigma_{|p + 1| }\sigma_{|q+1|}) + \deg Q(8_{8}) - 1 = |p| +
|q| + 7 - 1 - 1 = |p| + |q| + 5$ for $pq < 0$.
\end{proof}

\begin{coro}
There are only finitely many Kanenobu knots that are
quasi-alternating.
\end{coro}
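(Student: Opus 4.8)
The plan is to combine the two main results that have just been established: Theorem~\ref{main}, which asserts that any quasi-alternating link $L$ satisfies $\deg Q_L < \det(L)$, and Proposition~\ref{degree}, which computes the degree of the $Q$-polynomial of the Kanenobu knot $K(p,q)$ explicitly. Since every Kanenobu knot has determinant equal to $25$, the obstruction from Theorem~\ref{main} becomes a single numerical inequality that only finitely many pairs $(p,q)$ can satisfy.

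First I would recall that $\det(K(p,q)) = 25$ for all integers $p,q$, as noted in the paragraph preceding Proposition~4.5. Consequently, if $K(p,q)$ is quasi-alternating, Theorem~\ref{main} forces $\deg Q(p,q) < 25$. I would then substitute the explicit formula from Proposition~\ref{degree}: in either case ($pq \geq 0$ or $pq < 0$) the degree equals $|p| + |q| + c$ with $c \in \{5,6\}$, so the condition $\deg Q(p,q) < 25$ yields $|p| + |q| < 25 - c \leq 20$. This is a bounded region in the integer lattice $\mathbb{Z}^2$, hence contains only finitely many pairs $(p,q)$.

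The final step is simply to observe that the set of quasi-alternating Kanenobu knots is contained in the finite set of those $K(p,q)$ with $|p| + |q|$ below the stated bound, so it is itself finite. I would phrase the conclusion as a contrapositive: whenever $|p|+|q|$ is large enough that $\deg Q(p,q) \geq 25$, the knot $K(p,q)$ fails the inequality of Theorem~\ref{main} and therefore cannot be quasi-alternating.

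I do not expect any genuine obstacle here, since the corollary is an immediate consequence of the two preceding results once the constant determinant $25$ is invoked; the only point requiring a little care is bookkeeping the two cases in Proposition~\ref{degree} to get a uniform bound on $|p|+|q|$, and confirming that the degree is a strictly increasing function of $|p|+|q|$ so that the excluded region is genuinely the unbounded complement of a finite set.
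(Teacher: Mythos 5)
Your proposal is correct and follows essentially the same route as the paper: combine Theorem~\ref{main} with Proposition~\ref{degree} and the fact that $\det(K(p,q))=25$ to bound $|p|+|q|$, leaving only finitely many candidates. If anything, your case-by-case bookkeeping (getting the uniform bound $|p|+|q|<20$ from the lower bound $\deg Q \geq |p|+|q|+5$) is slightly more careful than the paper's one-line inequality, which quotes only the $pq\geq 0$ case.
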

\begin{proof}
A necessary condition for the Kanenobu knot to be quasi-alternating,
is as follows:
\[
\deg Q_{K(p,q)} \leq |p| + |q| + 6 < 25.
\]
This implies that $|p| + |q| < 19$ and we know that there are only
finitely many values of $p$ and $q$ that satisfy this inequality.
\end{proof}

The following corollary gives a partial solution of Conjecture 3.10
in \cite{QCQ}.

\begin{coro}\label{important}
The Montesinos link $ L =
M(e;(\alpha_{1},\beta_{1}),(\alpha_{2},\beta_{2}),
\ldots,(\alpha_{r},\beta_{r}),(\alpha,\beta) )$ for all $\alpha = l
+ k\beta$ for $k$ large enough and $ l = 0, 1,\ldots , \beta -1$ in
standard form is not quasi-alternating if $ e = 1$ and
$\sum_{i=1}^{r} \frac{\beta_{i}}{\alpha_{i}} = 1$. This result
supports Conjecture 3.10 in \cite{QCQ}.

\end{coro}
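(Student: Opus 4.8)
The plan is to prove the contrapositive of Theorem \ref{main}: I will show that for all sufficiently large $k$ the link $L = L_k$ satisfies $\deg Q_{L_k} \geq \det(L_k)$, which by Theorem \ref{main} forces $L_k \notin \mathcal{Q}$. The argument rests on two independent estimates that pull in opposite directions: the determinant of $L_k$ stays \emph{bounded} as $k$ grows, whereas the degree of its $Q$-polynomial grows \emph{without bound}. Both estimates exploit the fact that, since $\alpha = l + k\beta$ with $l$ and $\beta$ fixed, varying $k$ alters only the last rational tangle, and increasing $k$ by one inserts a single half-twist into one of its twist regions (because the reciprocal slope $\alpha/\beta = k + l/\beta$ has integer part $k$).

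First I would control the determinant. The determinant of a Montesinos link has the form $\det(L_k) = \bigl| \alpha \prod_{i=1}^r \alpha_i \bigr| \cdot \bigl| e - \sum_{i=1}^r \frac{\beta_i}{\alpha_i} - \frac{\beta}{\alpha} \bigr|$. Substituting the two hypotheses $e = 1$ and $\sum_{i=1}^r \frac{\beta_i}{\alpha_i} = 1$, the second factor collapses to $|\beta/\alpha|$, the factor $\alpha$ cancels, and one is left with $\det(L_k) = |\beta| \prod_{i=1}^r \alpha_i$. The point of the corollary is precisely that these two conditions are what is needed to annihilate the otherwise growing contribution $\alpha = l + k\beta$ of the last tangle, so that the determinant is independent of $k$.

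Next I would bound the degree from below. Applying the $Q$-skein relation at a crossing $c$ of the $k$-crossing twist region of the last tangle produces a recursion $Q_{L_k} + Q_{L_{k-2}} = x\bigl(Q_{L_{k-1}} + Q_N\bigr)$, in which the smoothing $L_{k-1}$ shortens the twist region by one crossing while the other smoothing $N$ caps it off and is therefore a fixed link independent of $k$. The homogeneous part of this recursion has characteristic polynomial $t^2 - xt + 1$, whose roots are exactly the $\alpha, \beta$ of Proposition \ref{degree}; accordingly its solutions are governed by the Chebyshev polynomials $\sigma_k$, which satisfy the same three-term relation and have $\deg \sigma_k = k - 1$. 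Solving the inhomogeneous recursion expresses $Q_{L_k}$ as a $k$-independent combination of $\sigma_k$ and $\sigma_{k-1}$ together with a $k$-independent correction term, whence $\deg Q_{L_k} = k + O(1)$ grows linearly in $k$.

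Combining the two estimates finishes the proof: since $\det(L_k) = |\beta|\prod_{i=1}^r \alpha_i$ is a fixed constant while $\deg Q_{L_k} = k + O(1) \to \infty$, we have $\deg Q_{L_k} \geq \det(L_k)$ for all large $k$, and Theorem \ref{main} rules out quasi-alternateness. I expect the main obstacle to be the last step of the degree estimate, namely verifying that the leading term of the $\sigma_k$-component does not cancel, so that the degree genuinely attains $k + O(1)$ rather than dropping. This is the same leading-coefficient bookkeeping performed for the $\sigma_n$ products in the proof of Proposition \ref{degree}, and it should be settled by computing the two base cases $Q_{L_0}$ and $Q_{L_1}$ explicitly and checking that their top-degree terms do not conspire to kill the $\sigma_k$ contribution.
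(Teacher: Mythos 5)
Your determinant step is exactly the paper's: with $e=1$ and $\sum_{i=1}^{r}\beta_i/\alpha_i=1$ the determinant collapses to $\beta\prod_{i=1}^{r}\alpha_i$, independent of $k$, and that part is fine. The two arguments diverge on the degree estimate, and this is where your proposal has a genuine gap. The paper never touches a skein recursion here: it quotes \cite[Lemma\,8 \& Theorem\,10]{LT} to get $\deg Q_{L} = c(D)-2 = c(L)-2$ for these reduced Montesinos diagrams, so the degree grows with $k$ simply because the crossing number does; the second half of the paper's proof then checks the standard-form inequalities $\frac{\alpha_i}{\alpha_i-\beta_i}\le\min\{\cdots\}$ that legitimize this citation and the connection to Conjecture 3.10 of \cite{QCQ}, a point your write-up does not address at all. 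Your recursion $Q_{L_k}+Q_{L_{k-2}} = x\bigl(Q_{L_{k-1}}+Q_N\bigr)$ is set up correctly (and, if completed, would have the merit of bypassing the crossing-number machinery), but on its own it only yields the \emph{upper} bound $\deg Q_{L_k}\le k+O(1)$, essentially Lemma \ref{simple} iterated along the twist region. What the application of Theorem \ref{main} needs is a \emph{lower} bound on $\deg Q_{L_k}$ that eventually exceeds the fixed determinant, and that is precisely the step you defer.

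The deferred step is the crux, and your proposed way of settling it cannot work. Writing the general solution as $Q_{L_k}=A\lambda^k+B\lambda^{-k}+\frac{xQ_N}{2-x}$, where $\lambda,\lambda^{-1}$ are the roots of $t^2-xt+1$, linear growth of the degree requires showing that the growing component neither vanishes identically nor is cancelled in top degree for all large $k$. You propose to verify this by ``computing the two base cases $Q_{L_0}$ and $Q_{L_1}$ explicitly,'' but $L_0$ and $L_1$ are not concrete links: they depend on the arbitrary data $r,(\alpha_1,\beta_1),\dots,(\alpha_r,\beta_r),\beta,l$ of the corollary, so no finite explicit computation can cover the whole family. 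A structural argument is needed instead; for instance, one can rule out $A\equiv 0$ by noting that if $A\equiv 0$ and $Q_{L_0}\neq \frac{xQ_N}{2-x}$, then $\lambda^{-1}$ would equal a ratio of rational functions of $x$, which it is not, while the degenerate case $(2-x)Q_{L_0}=xQ_N$ must be excluded separately; one then needs an induction on the recursion showing that once $\deg Q_{L_{k-1}}\ge\max\{\deg Q_{L_{k-2}},\,\deg Q_N+1\}$ holds, the degree increases by exactly one at every subsequent step. This is real additional work --- in effect re-proving the special case of the Lickorish--Thistlethwaite degree formula that the paper simply cites --- and without it your argument produces only an upper bound on $\deg Q_{L_k}$, which is the wrong direction of inequality for concluding non-quasi-alternateness.
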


\begin{proof}
According to Theorem \ref{main}, a necessary condition for the above
Montesinos link to be quasi-alternating is as follows:
\[
\deg Q_{L} = c(D) - 2 = c(L) - 2 < \det(L) =
\left(\alpha\prod_{i=1}^{r} \alpha_{i}\right) \left(-1 +
\sum_{i=1}^{r} \frac{\beta_{i}}{\alpha_{i}} +
\frac{\beta}{\alpha}\right)= \beta\prod_{i=1}^{r} \alpha_{i},
\]
the first two equalities follow from \cite[Lemma\,8 \&
Theorem\,10]{LT} with $c(L), c(D)$ are the crossing numbers of the
Montesinos link and its corresponding Montesinos reduced link
diagram respectively. The third equality follows from the formula to
compute the determinant of Montesinos link that first appeared in
\cite[Proposition\,3.1]{CO} and it can be derived from the work in
\cite{QCQ}. Note that increasing the value of $k$ will increase the
value of $c(D)$ while the determinant stays fixed. Therefore, we can
choose $k$ large enough for fixed $\beta$ so that
$\beta\prod_{i=1}^{r} \alpha_{i} \leq c(L) - 2$.

For the second claim, We have to show that
$\frac{\alpha_{i}}{\alpha_{i} - \beta_{i}} \leq \min \{\min \{
\frac{\alpha_{j}}{\beta_{j}}\ | \  j \neq i \},
\frac{\alpha}{\beta}\}$ for any $1 \leq i \leq r$ and
$\frac{\alpha}{\alpha - \beta} \leq \min \{
\frac{\alpha_{j}}{\beta_{j}}\ | \  1 \leq j \leq r \}$. If we choose
$k$ large enough such that $\frac{\alpha}{\beta} = \frac{l}{\beta} +
k > \frac{\alpha_{i}}{\beta_{i}}$ for all $1 \leq i \leq r$, then it
is enough to show $\frac{\alpha_{i}}{\alpha_{i} - \beta_{i}} \leq
\min \{ \frac{\alpha_{j}}{\beta_{j}}\ | \  j \neq i \}$ for any $1
\leq i \leq r$ and $\frac{\alpha}{\alpha - \beta} \leq \min \{
\frac{\alpha_{j}}{\beta_{j}}\ | \  1 \leq j \leq r \}$. For the
first part, suppose $\frac{\alpha_{i}}{\alpha_{i} - \beta_{i}} >
\frac{\alpha_{j}}{\beta_{j}} $ for some $  j$, then we have
$\alpha_{i}\beta_{j} + \alpha_{j}\beta_{i} > \alpha_{j}\alpha_{i}$.
This implies that $\frac{\beta_{j}}{\alpha_{j}} +
\frac{\beta{i}}{\alpha{i}} > 1$ which contradicts the assumption.
Similarly for the second part suppose that $\frac{\alpha}{\alpha -
\beta} > \frac{\alpha_{m}}{\beta_{m}} = \min \{
\frac{\alpha_{j}}{\beta_{j}}\ | \  1 \leq j \leq r \}$ for some $m$,
then we have $\frac{\alpha - \beta}{\alpha} <
\frac{\beta_{m}}{\alpha_{m}}$. Therefore, we obtain
\[
1- \frac{\beta_{m}}{\alpha_{m}} < 1 - \frac{\beta}{\alpha} <
\frac{\beta_{m}}{\alpha_{m}}.
\]
So we conclude that $\frac{1}{2} < \frac{\beta_{m}}{\alpha_{m}}$.
Thus,  $2 < \frac{\alpha}{\beta} = \frac{j+k\beta}{j+(k-1)\beta}.$
This implies that $j + (k-2)\beta < 0 $ which is a contradiction
since $0 \leq j,k-2,\beta$ for large $k$.

\end{proof}

\begin{rem}
Corollary 2.6 explains why most but finitely many of the Montesinos
links of the from $L(m,n) = M(0;(m^{2}+1,m),(n,1),(m^{2} + 1,m)) =
M(1;(m^{2}+1,m),(n,1),(m^{2} + 1,m - m^{2} - 1))$ for positive
integers $m, n$ and large $n$ are not quasi-alternating. This result
was first obtained in \cite[Theorem\,1.3]{G2} for $L(2,3)$ which is
the knot $11n50$. The proof of \cite[Theorem\,1.3]{G2} can be
generalized easily to all $L(m,n)$ with $m > n$ as it was suggested
by J. Greene in \cite[Subsection\,3.2]{G2}.
\end{rem}


In \cite{M}, Manolescu showed that all homologically thin in
Khovanov homology non alternating knots of crossing number less than
or equal to 9 are  quasi-alternating, except  the knot $9_{46}$.
Among the 42 non alternating knots of 10 crossings, 32 are
homologically thin in Khovanov homology. The authors of
\cite{B,CK,G1,M} showed that all these knots are quasi-alternating
except for the knot $10_{140}$. Shumakovitch in \cite{S} showed that
the knots $9_{46}, 10_{140}$ have thick odd Khovanov homology
groups, so they are not quasi-alternating.

Theorem \ref{main} does not characterize quasi-alternating
links since  the knots $9_{46}, 10_{128}$, and 
$11n 50$ for instance, satisfy the  inequality $\deg (Q_L) < det (L)$, but they are not
quasi-alternating.
Actually, the knot $10_{128}$ is homologically thick in Khovanov homology \cite{BM}.
The knot $11n 50$ which  is the Kanenobu knot
$K(3,0)$ does not bound a negative definite 4-manifold with
torsion-free as it has been shown by Greene in
\cite[Theorem\,1.3]{G2}.



\begin{prop}
There are  an infinite family  of knots and an infinite family  of
links which are  not quasi-alternating  but  satisfy the inequality in Theorem
\ref{main}.
\end{prop}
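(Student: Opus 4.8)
The plan is to build the two families by taking connected sums with a homologically thick knot, exploiting the fact that thickness of the reduced Khovanov homology obstructs quasi-alternateness (property (4) above) while the inequality of Theorem~\ref{main} behaves predictably under connected sum. The essential observation is that one cannot argue by primeness: the connected sum of two alternating links is again alternating, hence quasi-alternating, so composite links are often quasi-alternating. Instead I would track the homological width $w$, i.e. the number of diagonals $\delta = j - 2i$ supporting the reduced Khovanov homology over $\mathbb{F}_2$.

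First I would recall that reduced Khovanov homology is multiplicative under connected sum, $\widetilde{Kh}(L_1 \# L_2) \cong \widetilde{Kh}(L_1) \otimes \widetilde{Kh}(L_2)$, so that $w(L_1 \# L_2) = w(L_1) + w(L_2) - 1$. A quasi-alternating link is thin, i.e. $w = 1$; hence any connected sum having a summand with $w \geq 2$ has $w \geq 2$ and is therefore not quasi-alternating. Since $10_{128}$ is homologically thick \cite{BM}, every connected sum $10_{128} \# L$ is non quasi-alternating.

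Next I would compare $\deg Q$ with the determinant. For the $n$-fold connected sum $K_n := (10_{128})^{\#n}$, multiplicativity of the determinant and of the $Q$-polynomial (the product formula $Q(L_1 \# L_2) = Q(L_1)Q(L_2)$) give $\det(K_n) = \det(10_{128})^{n}$ and $\deg Q_{K_n} = n \deg Q_{10_{128}}$. Because $10_{128}$ already satisfies $\deg Q_{10_{128}} < \det(10_{128})$ with $\deg Q_{10_{128}} \geq 1$, we have $\det(10_{128}) \geq 2$, so $\det(K_n)$ grows exponentially in $n$ while $\deg Q_{K_n}$ grows only linearly. Hence $\deg Q_{K_n} < \det(K_n)$ for all sufficiently large $n$, giving an infinite family of knots that are not quasi-alternating yet satisfy Theorem~\ref{main}. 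For the link family I would take $L_n := K_n \# H$ with $H$ the Hopf link: each $L_n$ has the thick summand $K_n$, hence $w(L_n) \geq 2$ and $L_n$ is non quasi-alternating, while $\det(L_n) = 2\det(10_{128})^{n}$ and $\deg Q_{L_n} = n\deg Q_{10_{128}} + 1$ again give the inequality for large $n$.

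The main obstacle is the non quasi-alternateness step, since the naive route through primeness is unavailable: the argument has to rest on the additivity of the reduced Khovanov width under connected sum combined with the known thickness of $10_{128}$. Once this is in place the degree estimates are routine, the only quantitative input being that the determinant grows exponentially in the number of summands whereas $\deg Q$ grows linearly.
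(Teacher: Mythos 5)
Your proposal is correct, but it follows a genuinely different route from the paper. The paper exhibits \emph{prime} examples: the pretzel knots $P(r+2,r+1,-r)$ and pretzel links $P(r+1,r+1,-r)$ for odd $r>3$, whose Khovanov thickness is quoted from \cite{Q} and whose $Q$-degrees and determinants are computed in closed form via \cite{LT}, giving $3r+1\leq r^{2}-2$ and $3r+2\leq r^{2}-1$ respectively. You instead build composite examples by iterating connected sums of the single thick knot $10_{128}$ (plus a Hopf link summand for the link family): non-quasi-alternateness comes from the K\"{u}nneth formula for reduced Khovanov homology over $\mathbb{F}_2$, which makes the homological width additive, $w(L_1\# L_2)=w(L_1)+w(L_2)-1$, so thickness of one summand propagates and the obstruction of \cite{MO} applies; the inequality comes from multiplicativity of both the determinant and the $Q$-polynomial under connected sum (the latter is Proposition 2.1(2) of the paper), so that $\det$ grows exponentially in the number of summands while $\deg Q$ grows only linearly. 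Your argument is essentially self-contained given one known thick knot, and in fact the inequality holds for every $n\geq 1$ (since $\deg Q_{10_{128}}=8$ and $\det(10_{128})=11$, one has $8n<11^{n}$ for all $n$), not merely for large $n$; the distinct determinants $\det(10_{128})^{n}$ also guarantee the knots are pairwise distinct, a point worth stating. Two things the paper's approach buys that yours does not: its examples are prime, so the phenomenon is not an artifact of connected sums, and its companion result (Proposition 2.9) produces \emph{thin} non-quasi-alternating examples, which a thickness-based construction can never yield. One fine point you should make explicit: the Manolescu--Ozsv\'{a}th obstruction concerns reduced Khovanov homology with $\mathbb{Z}/2\mathbb{Z}$ coefficients, so you need $10_{128}$ to be thick in that theory; this follows from its rational thickness recorded in \cite{BM} together with the universal coefficient theorem, and the K\"{u}nneth isomorphism you invoke requires field coefficients, which is exactly why working over $\mathbb{F}_2$ is the right choice.
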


\begin{proof}
The first family is the set of the pretzel knots of the form $P(r +
2, r + 1, -r)$ and the second family is the set of the pretzel links
$P(r + 1, r + 1, -r)$, where  $r > 3$ is an odd integer. It has been shown
in \cite{Q} that these knots and links are
thick in Khovanov homology. Therefore, they are not
quasi-alternating. However, they satisfy the  inequality in Theorem \ref{main}.
\begin{align*}
\deg Q_{P(r + 2, r + 1, -r)} & = 3r + 1\leq r ^{2} - 2 = \det(P(r + 2, r + 1, -r))\\
\deg Q_{P(r + 1, r + 1, -r)} & = 3r + 2 \leq r^{2} - 1 = \det(P(r +
1, r + 1, -r)),
\end{align*}
where the first equality in each of the two equations above follows
from \cite[Lemma\,8 \& Theorem\,10]{LT}.
\end{proof}

\begin{prop}
There is an infinite family of links that are not quasi-alternating
homologically thin in Khovanov homology and satisfy the inequality
in Theorem \ref{main}.
\end{prop}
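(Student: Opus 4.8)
The plan is to produce the required examples from the infinite family of Montesinos links
$$
L(m,n)=M\bigl(0;(m^{2}+1,m),(n,1),(m^{2}+1,m)\bigr),
$$
which already appears in the Remark following Corollary \ref{important}, taken in the range $m>n\geq 1$ (so that, e.g., all $L(m,1)$ with $m\geq 2$ qualify, giving infinitely many members; choosing the parities of $m$ and $n$ appropriately one gets infinitely many genuine multi-component links as well). Two of the three required properties are in fact already available: by \cite[Theorem\,1.3]{G2}, together with the generalization to all $m>n$ indicated there by Greene, each such $L(m,n)$ is homologically thin in Khovanov homology yet fails to be quasi-alternating, the obstruction being that its branched double cover does not bound a negative definite $4$-manifold of the kind forced by property (2) of the introduction. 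Hence the only genuinely new point is to verify that every member of this subfamily satisfies the inequality of Theorem \ref{main}.

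For that verification I would reuse the two computations already exploited in the proof of Corollary \ref{important}. First, by \cite[Lemma\,8 \& Theorem\,10]{LT} the standard reduced Montesinos diagram is minimal, whence $\deg Q_{L(m,n)}=c(L(m,n))-2$; counting crossings in the two $(m^{2}+1,m)$-tangles (each contributing $2m$, since $(m^{2}+1)/m=[m,m]$) and in the $(n,1)$-tangle (contributing $n$) gives $c(L(m,n))=4m+n$, so $\deg Q_{L(m,n)}=4m+n-2$. Second, the determinant formula of \cite[Proposition\,3.1]{CO} (also derivable from \cite{QCQ}) yields
$$
\det L(m,n)=\Bigl|\prod_{i}\alpha_{i}\Bigr|\,\Bigl|\sum_{i}\beta_{i}/\alpha_{i}\Bigr|=2mn(m^{2}+1)+(m^{2}+1)^{2}.
$$
Since the determinant grows like $m^{4}$ while $\deg Q_{L(m,n)}$ grows only linearly, the inequality $4m+n-2<2mn(m^{2}+1)+(m^{2}+1)^{2}$ is immediate for all $m>n\geq 1$, and Theorem \ref{main} is satisfied.

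The point that requires care is the choice of parameter range, and this is really the crux of the statement. Corollary \ref{important} shows that when $n$ is large the crossing number, hence $\deg Q_{L(m,n)}$, eventually overtakes the relevant part of the determinant, so Theorem \ref{main} itself already obstructs quasi-alternateness --- exactly the links one must avoid here. The family must therefore live in the complementary regime $m>n$, where the degree bound holds and is powerless, so that non-quasi-alternateness has to be supplied by Greene's strictly finer Heegaard--Floer ($d$-invariant) obstruction rather than by any polynomial criterion. Consequently the real content is not the elementary degree-versus-determinant comparison above, but the two homological inputs I am taking from \cite{G2}: that Greene's diagonalization argument for $11n50=L(2,3)$ genuinely propagates to the whole subfamily $m>n$, and that every such $L(m,n)$ is in fact homologically thin. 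The role of Theorem \ref{main} in this proposition is only the negative one of confirming that its own inequality does not detect these examples.
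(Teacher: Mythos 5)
Your route is genuinely different from the paper's, which proves this proposition with the pretzel links $P(n,n,-n)$, $n\geq 3$: these are homologically thin by \cite{Q}, non-quasi-alternating by the pretzel classification \cite[Theorem\,1.4]{G2}, and satisfy $\deg Q_{P(n,n,-n)}=3n-2\leq n^{2}=\det(P(n,n,-n))$ by the same \cite[Lemma\,8 \& Theorem\,10]{LT} computation you use. Your arithmetic for the degree/determinant inequality is fine, but that was never the hard part; the weight of the proposition sits on producing \emph{infinitely many} links that are simultaneously thin and non-quasi-alternating, and there your citations do not deliver. Greene's Theorem\,1.3 establishes thinness only for the single knot $11n50$ (by direct computation), and the generalization invoked in the Remark following Corollary~\ref{important} --- and in \cite[Subsection\,3.2]{G2} --- is a generalization of the \emph{obstruction} to quasi-alternateness (the negative-definite $4$-manifold argument), not of the thinness verification. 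Since your range $m>n\geq 1$ forces $m\to\infty$ along the family, you would need thinness for infinitely many values of $m$, and nothing in \cite{G2} (or elsewhere in this paper) provides it. The paper's switch to $P(n,n,-n)$ is precisely engineered to avoid this hole: for that family thinness is a proved theorem in \cite{Q}, not a suggested extension.

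There is also an inconsistency in your determinant computation that infects the citations further. You compute $\det L(m,n)=2mn(m^{2}+1)+(m^{2}+1)^{2}$, hence $\det L(2,3)=85$; but the paper identifies $L(2,3)=11n50=K(3,0)$, a Kanenobu knot, and all Kanenobu knots have determinant $25$. The discrepancy arises from taking the Montesinos symbol in the Remark at face value: in the family Greene's argument actually treats, the two $(m^{2}+1,\cdot)$ tangles carry fractions of opposite sign, making the determinant $(m^{2}+1)^{2}$, independent of $n$. Your own closing paragraph exposes the problem: you assert that for large $n$ the crossing number overtakes the determinant (the mechanism of Corollary~\ref{important}), yet with your formula the determinant grows in $n$ with slope $2m(m^{2}+1)>1$, so $c(L)-2=4m+n-2$ never catches up. In other words, the links your formulas describe are not the links to which the thin/non-quasi-alternating results you import from \cite{G2} apply, so both homological inputs of your proof are unsupported as written.
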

\begin{proof}
The family is the set of the pretzel links $P(n,n,-n)$ for $n \geq
3$. It has been shown that all these links  are homologically thin
in Khovanov homology in \cite{Q}. However, they  are not
quasi-alternating by Theorem 1.4 in \cite{G2}. It is left to show
that all these links  satisfy the inequality in Theorem \ref{main}.
We have
\[
\deg Q_{P(n,n,-n)} = 3n - 2 \leq n^{2} = \det(P(n,n,-n)),
\]
for $n \geq 3$, where the first equality of above equation follows
from \cite[Lemma\,8 \& Theorem\,10]{LT}.
\end{proof}

Let $L$ be a oriented link. The breadth of the Jones polynomial
$\breadth V_L(t)$ is defined to be the difference between the
highest and the lowest degree of $t$ that appear in $V_L(t)$.
Inspired by Theorem \ref{main} and computations of the breadth and
the determinants of a large number  of links, we conjecture the
following.
\begin{conj}
If $L$ is a quasi-alternating link, then $\breadth V_{L}(t) \leq
det(L)$.
\end{conj}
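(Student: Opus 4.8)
The plan is to follow the template of the proof of Theorem~\ref{main} as closely as possible, replacing $\deg Q_L$ by $\breadth V_L$ and the skein estimate of Lemma~\ref{simple} by an analogous two-sided estimate. First I would pass from the Jones polynomial to the Kauffman bracket, recalling that $V_L(t)=(-A)^{-3w(D)}\langle D\rangle$ with $t=A^{-4}$; since $\breadth$ is unchanged by multiplication by a monomial, the writhe normalization is irrelevant and one has $\breadth V_L=\tfrac14\,\mathrm{span}_A\langle L\rangle$. The unoriented skein relation $\langle L\rangle=A\langle L_0\rangle+A^{-1}\langle L_\infty\rangle$ is then perfectly adapted to the two smoothings of Definition~\ref{def}. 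The induction would again be on $\det(L)$: for the inductive step it suffices to establish a breadth analogue of Lemma~\ref{simple} of the shape $\breadth V_L\le\max\{\breadth V_{L_0},\breadth V_{L_\infty}\}+1$, because the inductive hypotheses $\breadth V_{L_0}\le\det(L_0)$ and $\breadth V_{L_\infty}\le\det(L_\infty)$, together with $\det(L_0),\det(L_\infty)\ge1$, give $\max\{\det(L_0),\det(L_\infty)\}+1\le\det(L_0)+\det(L_\infty)=\det(L)$, exactly as in Theorem~\ref{main}.

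The difficulty is that this last skein estimate is \emph{false} for a general crossing, and this is where the real content of the conjecture sits. Unlike the degree, which records only the top term and is therefore governed by the larger of the two smoothings, the breadth records both ends of the polynomial. In $\langle L\rangle=A\langle L_0\rangle+A^{-1}\langle L_\infty\rangle$ the top degree may be contributed by $L_0$ and the bottom degree by $L_\infty$, so the naive bound yields only something like $\mathrm{span}_A\langle L\rangle\le\tfrac12(\mathrm{span}_A\langle L_0\rangle+\mathrm{span}_A\langle L_\infty\rangle)+\text{const}$, which is too weak to close the induction. Hence the estimate can hold only by exploiting extra structure of quasi-alternating links, and the plan must use more than the bracket skein relation alone.

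The structure I would exploit is thinness: a quasi-alternating link is homologically thin in Khovanov homology (property~(4) of the introduction). For a thin link the reduced Khovanov homology over $\mathbb{Q}$ is free, concentrated on a single $\delta$-diagonal, and its total dimension equals $\det(L)$; moreover there is no cancellation in the graded Euler characteristic, so if $V_L=\sum_k a_k t^k$ then $\det(L)=\sum_k|a_k|$. Writing the support of $V_L$ as a subset of $\{k_{\min},\dots,k_{\max}\}$, the conjecture becomes the purely combinatorial statement $k_{\max}-k_{\min}\le\sum_k|a_k|$; equivalently, the number of interior gaps in the support is at most $1+\sum_k(|a_k|-1)$. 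This reformulation is sharp: for the trefoil it holds with equality, consistent with $\breadth V=\det=3$.

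To prove the gap inequality I would return to the induction, but carried out on the reduced Khovanov homology rather than on the bracket. The key point is that at a quasi-alternating crossing the determinant additivity $\det(L)=\det(L_0)+\det(L_\infty)$ forces the unoriented skein exact triangle relating the reduced homologies of $L$, $L_0$ and $L_\infty$ to split, so that the support of $V_L$ is, up to the explicit grading shifts of the triangle, the union of the supports of $V_{L_0}$ and $V_{L_\infty}$. Thinness of all three links pins down the relative grading shift between the two pieces, since they must land on one common diagonal, and this is what should convert the max-of-breadths obstruction into the desired additive bound. The main obstacle I anticipate is precisely this grading bookkeeping: verifying that the splitting is grading-compatible in the sharp way required, and controlling how the interior gaps of $L_0$ and $L_\infty$ combine under the shift without creating a gap that the multiplicities $|a_k|$ cannot pay for. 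Making this last accounting rigorous, and tight on the trefoil, is where I expect essentially all of the work to lie.
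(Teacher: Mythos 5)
You should first be aware of the status of this statement in the paper: it is Conjecture 2.10, and the authors do \emph{not} prove it. The only thing the paper establishes is the special case of quasi-alternating closed 3-braids (the appendix), and by a completely different route: Baldwin's classification of quasi-alternating closed 3-braids together with Murasugi's normal forms, explicit determinant formulas obtained from the Burau representation and spanning-tree counts, and braid-word reductions give $c(L)\leq \det(L)$ for those links, which implies the conjecture since $\breadth V_{L}(t)\leq c(L)$ holds for every link. So there is no proof of the general statement in the paper against which your argument can be matched; the question is whether your outline is itself a complete proof.

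It is not, and the gap is exactly where you suspect it is. Your first two steps are sound and correctly diagnosed: the bracket-skein induction modeled on Lemma~\ref{simple} and Theorem~\ref{main} fails because breadth is a two-sided quantity, and the reduction of the conjecture, via thinness, to a gap inequality (using $\det(L)=\sum_{k}|a_{k}|$, which is correct for Khovanov-thin links) is a legitimate reformulation. But the step carrying all the weight --- that the unoriented skein exact triangle splits in a grading-compatible way forcing the shifted supports of $V_{L_{0}}$ and $V_{L_{\infty}}$ to overlap, or at least be adjacent --- is asserted, not proved. Rank additivity (Manolescu--Ozsv\'ath) does give the splitting, but it only constrains the common $\delta$-diagonal on which all three homologies live; it does not by itself locate the two $q$-supports relative to one another. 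If the two shifted supports were disjoint with a gap of $g\geq 1$ powers of $t$, then $\breadth V_{L}=\breadth V_{L_{0}}+\breadth V_{L_{\infty}}+g+\mathrm{const}$, while the inductive hypothesis only gives $\breadth V_{L_{i}}\leq \det(L_{i})$ (non-strict, since the conjectured inequality is non-strict), so closing the induction requires precisely the overlap you have not established. Until that grading bookkeeping is done, your argument is a research plan rather than a proof --- which is consistent with the fact that the statement remains open in the paper.
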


Conjecture 2.10 is weaker than the one in \cite{QQJ} which states
that for any quasi-alternating link, the crossing number is a lower
bound of the determinant, $c(L)\leq det(L)$ since we know that the
breadth of the Jones polynomial is always less than or equal to the
crossing number of the link. The importance of the latter does also
come from that it solves a conjecture of Greene in \cite{G2} which
states that there are only finitely many quasi-alternating links
with a given determinant. However, Conjecture 2.10 has the advantage
that it involves the breadth of the Jones polynomial which is, in
general, easier to compute than the crossing number. Conjecture 2.10
is true for all quasi-alternating links that have been checked to
satisfy the conjecture $c(L)\leq det(L)$, see \cite{QQJ}. In the
appendix, we prove both conjectures for quasi-alternating closed
3-braids.

We now apply the obstruction criterion introduced by Theorem
\ref{main} to provide a table of knots of 12 crossings or less that
are not quasi-alternating. A second table contains  links of 9
crossings or less that are not quasi-alternating is also provided.
The computations of the Q-polynomials and the determinants are done
using knotinfo \cite{CL}.

Finally, we close this section  with the following  two questions:
\begin{ques}
Can we determine all Kanenobu knots that are quasi-alternating?
\end{ques}

We conjecture that $K(0,0),K(1,0),K(1,-1)$ are the only Kanenobu
knots that are quasi-alternating.

\begin{ques}
Can we characterize all quasi-alternating knots with crossing number
less than or equal to 11?
\end{ques}

Our table combined with the table in \cite{JS} gives a partial
solution for the above question.

\begin{table}[h]
\centering
\begin{tabular}{|c|c|c||c|c|c||c|c|c|}
\hline

Knot & Det. & Deg. & Knot & Det. & Deg. & Knot & Det. & Deg. \\
\hline
$8_{19}$ & 3 & 6 & $9_{42}$ & 7 & 7& $10_{124}$ & 1 & 8\\
\hline
$10_{132}$ & 5 & 8& $10_{139}$ & 3 & 8  & $10_{145}$ & 3& 8 \\
\hline
$10_{153}$ & 1 & 8 & $10_{161}$ & 5 & 6 & $11n 9$ & 5 & 9 \\
\hline
$11n 19$ & 5 & 9& $11n 31$ & 3 & 9& $11n 34$& 1 & 9\\
\hline
$11n 38$ & 3 & 9& $11n 42$ & 1 & 9 & $11n 49$& 1 & 9 \\
\hline
$11n 57$ & 7 & 9 & $11n 67$ & 9 & 9 & $11n 96$ & 7 & 9 \\
\hline
$11n 102$ & 3 & 9 & $11n 104$ & 3 & 9& $11n 111$ & 7 & 9 \\
\hline
$11n 116$ & 1 & 7 & $11n 135$ & 5 & 7 & $11n 139$ & 9 & 9 \\
\hline
$12n 0019$ & 1 & 10 & $12n 0023$ & 9 & 10&  $12n 0031$ & 9 & 10\\
\hline
$12n 0051$ & 9 & 10 & $12n 0056$ & 9 & 9& $12n 0057$ & 9 & 9 \\
\hline
$12n 0096$ & 7 & 10 & $12n 0118$ & 7 & 10& $12n 0121$ & 1 & 10\\
\hline
$12n 0124$ & 7 & 10 & $12n 0129$ & 7 & 10 & $12n 0149$ & 5 & 10\\
\hline
$12n 0175$ & 3 & 10 & $12n 0200$ & 9 & 10& $12n 0210$ & 1 & 10 \\
\hline
$12n 0214$ & 1 & 10 & $12n 0217$ & 5 & 10 & $12n 0221$ & 9 & 9 \\
\hline
$12n 0242$ & 1  & 10 & $12n 0243$ & 5 & 10 & $12n 0268$ & 9 & 10 \\
\hline
$12n 0273$ & 5 & 10 & $12n 0292$ & 1 & 10 & $12n 0293$ & 7 & 10 \\
\hline
$12n 0309$ & 1 & 10 & $12n 0313$ & 1 & 10 & $12n 0318$ & 1 & 8 \\
\hline
$12n 0332$ & 9 & 10 & $12n 0336$ & 5 & 10 & $12n 0352$ & 7 & 10 \\
\hline
$12n 0370$ & 5 & 10 & $12n 0386$ & 9 & 10 & $12n 0402$ & 9 & 10 \\
\hline
$12n 0403$ & 9 & 10 & $12n 0404$ & 3 & 10 & $12n 0419$ & 3 & 10 \\
\hline
$12n 0430$ & 1 & 9 & $12n 0439$ & 3 & 8 & $12n 0446$ & 7 & 10\\
\hline
$12n 0473$ & 1 & 10 & $12n 0475$ & 7 & 10 & $12n 0488$ & 5 & 10 \\
\hline
$12n 0502$ & 9 & 10 & $12n 0519$ & 7 & 8 & $12n 0552$ & 9 & 9 \\
\hline
$12n 0574$ & 9 & 10 & $12n 0575$ & 3 & 10 & $12n 0579 $ & 9 & 9\\
\hline
$12n 0582$ & 9 & 10 & $12n 0591$ & 7 & 8& $12n 0605$ & 9 & 10  \\
\hline
$12n 0617$ & 5 & 9 & $12n 0644$ & 7 & 8 & $12n 0655$ & 3 & 8\\
\hline
 $12n 0673$ & 5 & 10 & $12n 0676$ & 9 & 10 & $12n 0689$ & 7 & 10 \\
\hline
 $12n 0725 $ & 5 & 10 & $12n 0749$ & 7 & 8 & $12n 0812$ & 9 & 9  \\
\hline
$12n 0815$ & 7 & 9 & $12n 0851$ & 5 & 8 & & &\\
\hline
\end{tabular}

\vspace{0.2cm} \caption{Knot table} \label{1}
\end{table}
\begin{table}[h]
\centering
\begin{tabular}{|c|c|c||c|c|c||c|c|c|}
\hline
Link & Det. & Deg. &  Link & Det. & Deg. & Link & Det. & Deg. \\
\hline
$L8n3$ & 4 & 6 & $L8n6$ & 0  & 6 & $L8n8$ & 0 & 5  \\
\hline
$L9n4$ & 4 & 7 & $L9n9$ & 4   & 7 & $L9n12$ & 6 & 7  \\
\hline
$L9n15$ & 2 & 7& $L9n18$ & 2  & 7& $L9n19$ & 0 & 5 \\
\hline
$L9n21$ & 4 & 6 & $L9n27$ & 4  &  7 &  &  &  \\
\hline
\end{tabular}
\vspace{0.2cm} \caption{Link table} \label{3}
\end{table}

\section{Appendix: Proof of Conjecture 2.10 for closed 3-braids}

In this section we prove that Conjecture 2.10 holds for
quasi-alternating closed 3-braids. Although a direct proof can be
given by computing the breadth of the Jones polynomial of closed
3-braids. We prefer here to prove that for any  quasi-alternating
link $L$ of braid index less than or equal to 3, we have $c(L) \leq
det(L)$. This will imply that our conjecture is true for this class
of links.

For $n\geq 2$, let $B_n$ be the  braid group on $n$ strings. It is
well known that $B_n$ is generated by the elementary braids
$\sigma_{1}, \sigma_{2}, \dots ,\sigma_{n-1}$ subject to the
following relations:
\begin{align*}
\sigma_{i} \sigma_{j} & =\sigma_{j} \sigma_{i} \mbox{ if } |i-j| \geq 2 {\mbox{ and }}\\
\sigma_{i} \sigma_{i+1} \sigma_{i} & = \sigma_{i+1}
\sigma_{i}\sigma_{i+1}, \ \forall \ 1 \leq i \leq n-2.
\end{align*}

The importance of braid groups that Alexander's Theorem states that
every link $L$ in $S^3$ can be obtained as the closure of a certain
braid $b$. We write $L=\hat b$. The two generators $\sigma_1$  and
$\sigma_2$ of $B_3$ are pictured in figure \ref{braid}. Closed
3-braids have been classified up to conjugation by Murasugi in
\cite{Mu}.
\begin{figure}
\begin{center}
\includegraphics[width=5cm,height=1.5cm]{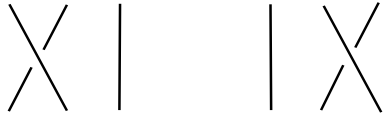}
\caption{The generators $\sigma_1$  and $\sigma_2$ of $B_3$
respectively}
\label{braid}
\end{center}
\end{figure}

\begin{thm}\label{thm}
Let $b$ be a 3-braid and let $h=(\sigma_1\sigma_2)^3$ be a full positive twist. Then $b$ is conjugate to exactly
one of the following:
\begin{enumerate}
\item $h^n\sigma_1^{p_1}\sigma_2^{-q_1}\dots \sigma_1^{p_s}\sigma_2^{-q_s}$, where $s,p_i$ and $q_i$ are positive integers.
\item $h^n\sigma_2^{m}$ where $m \in \Bbb{Z}$.
\item $h^n\sigma_1^{m}\sigma_2^{-1}$, where $m \in \{-1,-2,-3\}$.
\end{enumerate}
\end{thm}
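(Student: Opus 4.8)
The plan is to reduce the classification to the well-understood conjugacy theory of $PSL_2(\mathbb{Z})$. The starting point is the alternative presentation $B_3 \cong \langle x, y \mid x^2 = y^3 \rangle$, where $x = \sigma_1\sigma_2\sigma_1$ and $y = \sigma_1\sigma_2$; here the braid relation $\sigma_1\sigma_2\sigma_1 = \sigma_2\sigma_1\sigma_2$ is precisely what forces $x^2 = y^3$. First I would verify that $h = (\sigma_1\sigma_2)^3 = y^3 = x^2$ generates the center of $B_3$, and that the quotient $B_3/\langle h\rangle$ is the free product $\langle \bar x\rangle \ast \langle \bar y\rangle \cong \mathbb{Z}/2 \ast \mathbb{Z}/3 \cong PSL_2(\mathbb{Z})$. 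This yields a central extension $1 \to \langle h\rangle \to B_3 \to PSL_2(\mathbb{Z}) \to 1$ on which the whole argument rests.

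Because the extension is central, conjugation in $B_3$ fixes every power of $h$, so the conjugacy class of $b$ is determined by the conjugacy class of its image $\bar b$ in $PSL_2(\mathbb{Z})$ together with the integer $n$ recording the central part. The latter is pinned down by the exponent-sum homomorphism $e : B_3 \to \mathbb{Z}$, $e(\sigma_i) = 1$, which is a conjugacy invariant with $e(h) = 6$: once a canonical word for $\bar b$ is fixed, distinct values of $n$ give non-conjugate lifts $h^n w$, and these exhaust the fiber over $\bar b$. Thus the problem splits cleanly into (i) classifying conjugacy classes in $\mathbb{Z}/2 \ast \mathbb{Z}/3$ and (ii) choosing, for each class, a canonical $\sigma$-word lift.

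For (i) I would invoke the standard normal-form theory for free products (Kurosh / Bass--Serre): every element is conjugate either to an element of one of the finite factors --- giving the identity together with the three nontrivial torsion classes (one of order $2$, two of order $3$) --- or to a cyclically reduced word alternating between the two factors, unique up to cyclic permutation. Translating each class back to $B_3$ is where the three listed families appear, matching the Nielsen--Thurston trichotomy: the alternating cyclically reduced words lift, after using $\Delta \sigma_1 \Delta^{-1} = \sigma_2$ and the relations to arrange the signs, to the generic (pseudo-Anosov) family $h^n\sigma_1^{p_1}\sigma_2^{-q_1}\cdots\sigma_1^{p_s}\sigma_2^{-q_s}$ of case (1); the parabolic/reducible classes, in which a single factor letter survives, collapse to $h^n\sigma_2^m$ of case (2), with $m=0$ recovering the center; and the three torsion classes lift to the periodic representatives $h^n\sigma_1^m\sigma_2^{-1}$, $m \in \{-1,-2,-3\}$, of case (3).

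The main obstacle is the bookkeeping in this last translation step rather than any single deep idea. Two points need genuine care. First, one must show that every alternating word can be rewritten, using only the braid relation and conjugation by $\Delta$, into the canonical shape $\sigma_1^{+}\sigma_2^{-}$ with all exponents positive; this combinatorial normalization is the heart of Murasugi's original argument. Second, one must establish the \emph{exactly one} uniqueness, namely that the three families are pairwise disjoint and that within case (1) the data $(p_i,q_i)$ is determined up to cyclic permutation. I would obtain uniqueness by combining the cyclic-word uniqueness in $PSL_2(\mathbb{Z})$ with the exponent-sum invariant $e$, which already separates the central parameter $n$ and the small exceptional cases, and by checking that the torsion and parabolic images genuinely differ from the hyperbolic ones, so that no representative of case (1) can coincide with one from cases (2) or (3).
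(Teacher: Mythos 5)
The paper never proves this statement: it is Murasugi's classification of $3$-braids up to conjugacy, quoted (in Baldwin's formulation) with a citation to [Mu], so there is no in-paper argument to compare yours against. Your route --- the central extension $1 \to \langle h\rangle \to B_3 \to PSL_2(\mathbb{Z}) \to 1$ coming from the presentation $B_3=\langle x,y \mid x^2=y^3\rangle$, combined with the conjugacy theorem for the free product $\mathbb{Z}/2 \ast \mathbb{Z}/3$ --- is the standard modern derivation of Murasugi's theorem, and its skeleton is sound. In particular, your key reduction is correct: $b$ and $b'$ are conjugate in $B_3$ if and only if $\bar b \sim \bar b'$ in $PSL_2(\mathbb{Z})$ and $e(b)=e(b')$, since $\bar g \bar b \bar g^{-1}=\bar b'$ lifts to $gbg^{-1}=h^k b'$ and the exponent sum forces $k=0$. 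The translation step you defer as ``bookkeeping'' also checks out: writing $\bar\sigma_1=\bar y^{2}\bar x$ and $\bar\sigma_2^{-1}=\bar y\bar x$, a cyclically reduced alternating word whose $\bar y$-syllables are \emph{not} all equal regroups, up to cyclic permutation, as $\bar\sigma_1^{p_1}\bar\sigma_2^{-q_1}\cdots\bar\sigma_1^{p_s}\bar\sigma_2^{-q_s}$ with $p_i,q_i\geq 1$ (your case (1), the hyperbolic classes); words with all $\bar y$-syllables equal are powers of $\bar\sigma_2^{\pm1}$ (case (2), parabolic, with $m=0$ giving the identity class); and the three torsion classes are hit by $\sigma_1^{-2}\sigma_2^{-1}\mapsto \bar x$, $\sigma_1^{-1}\sigma_2^{-1}\mapsto \bar y^{2}$, $\sigma_1^{-3}\sigma_2^{-1}\mapsto \bar y$ (case (3)). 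Two points should be made explicit in a full write-up: first, ``exactly one'' in case (1) must be read up to cyclic permutation of the blocks $(p_i,q_i)$, since cyclic permutation is itself realized by conjugation; second, your phrase ``a single factor letter survives'' for case (2) should be sharpened to the precise dichotomy just described (all $\bar y$-exponents equal versus mixed), as that is exactly what separates case (2) from case (1). Compared with Murasugi's original, more combinatorial memoir, your argument is shorter and conceptually cleaner, at the cost of importing the normal-form and conjugacy theory of free products.
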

Baldwin in \cite{B} classified quasi-alternating closed 3-braids in
the following theorem:

\begin{thm}\label{thm}
Let $L$ be a closed 3-braid, then
\begin{enumerate}
\item If $L$ is the closure of $h^n\sigma_1^{p_1}\sigma_2^{-q_1}\dots \sigma_1^{p_s}\sigma_2^{-q_s}$,
where $s, p_i$ and $q_i$ are positive integers, then $L$ is quasi-alternating if and only if $n \in \{-1,0,1\}$.
\item If $L$ is the closure of $h^n\sigma_2^{m}$, then $L$ is quasi-alternating if and only if
either $n=1$ and $m \in \{-1,-2,-3\}$ or $n=-1$ and $m \in \{1,2,3\}$.
\item If $L$ is the closure of $h^n\sigma_1^{m}\sigma_2^{-1}$ where $m \in \{-1,-2,-3\}$.
Then $L$ is quasi-alternating  if and only if $n \in \{0,1\}$.
\end{enumerate}
\end{thm}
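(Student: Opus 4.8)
The plan is to take Murasugi's normal form (the classification quoted just above) as the starting point, so that it suffices to treat each of the three families separately, and within each family to prove the two implications of the ``if and only if'' by different methods. The positive (quasi-alternating) direction I would establish constructively from the recursive Definition \ref{def}, while the negative direction I would obtain from obstructions coming from the branched double cover.

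For the positive direction in family (1), note that when $n=0$ the braid word $\sigma_1^{p_1}\sigma_2^{-q_1}\cdots\sigma_1^{p_s}\sigma_2^{-q_s}$ closes up to an alternating diagram, because the positive generators $\sigma_1$ and the negative generators $\sigma_2^{-1}$ produce crossings of opposite type along the two columns of the braid; alternating links are quasi-alternating, which settles $n=0$. For $n=\pm 1$ I would argue by induction on $s$ together with the skein-theoretic behaviour of the determinant: one locates a crossing $c$ at which both smoothings $L_0,L_\infty$ again lie in one of Murasugi's families with strictly smaller complexity, checks that $\det(L)=\det(L_0)+\det(L_\infty)$ with both determinants positive, and invokes the induction hypothesis. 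The finitely many exceptional closures appearing in families (2) and (3) are small torus-type links that can be identified by hand and checked to be alternating or to be among the standard quasi-alternating examples.

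The negative direction is the substance of the theorem, and here I would pass to the double branched cover $\Sigma(L)$. For a closed $3$-braid this is a Seifert fibred space over $S^2$ whose Seifert invariants are governed by the power $n$ of the full twist $h$ and by the exponents $p_i,q_i$ (respectively $m$). If $L$ is quasi-alternating then, by properties (1) and (2) recorded in the introduction, $\Sigma(L)$ must be an $L$-space that bounds a negative definite $4$-manifold with vanishing first homology. I would therefore combine two obstructions: first, the characterisation of which Seifert fibred spaces are $L$-spaces (equivalently, admit no taut foliation), which already eliminates $\Sigma(L)$ once $|n|$ is large; and second, for those values of $n$ where $\Sigma(L)$ remains an $L$-space, a Donaldson-type diagonalisation argument applied to the canonical negative definite plumbing bounded by $\Sigma(L)$, in the manner of Ozsv\'ath--Szab\'o and Greene, to rule out the required negative definite filling. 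The orbifold Euler number of the Seifert fibration, a monotone function of $n$, is what pins down the exact thresholds $n\in\{-1,0,1\}$ in family (1) and the exceptional ranges of $(n,m)$ in families (2) and (3). As a cheap preliminary filter one can also apply Theorem \ref{main}: computing $\deg Q_L$ and $\det(L)$ directly from the normal form disposes of some of the $|n|\ge 2$ cases immediately, though being only a necessary condition it cannot by itself produce the sharp classification.

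The hard part will be the second obstruction, namely showing that even when $\Sigma(L)$ is an $L$-space it fails to bound the negative definite $4$-manifold demanded by quasi-alternateness. This requires an explicit description of the star-shaped plumbing lattice associated with the Seifert structure, a computation of its intersection form as a function of $n$ and the exponents, and a verification that for the excluded parameter values this form admits no embedding into a diagonal negative definite lattice; extracting the precise numerical thresholds from the resulting $d$-invariant inequalities is the delicate bookkeeping step on which the whole characterisation turns.
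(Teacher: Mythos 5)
First, a point of comparison: the paper itself contains \emph{no proof} of this statement. It is Baldwin's classification, quoted verbatim from \cite{B} (with the borderline family completed by Greene \cite{G2}), so the only meaningful comparison is with the published arguments. Measured against those, your architecture is essentially the historically correct one: Baldwin proves the positive direction exactly by your skein induction from the recursive definition (the $n=0$ closures are alternating; for $n=\pm1$ one resolves a crossing so that both resolutions stay in Murasugi's families with smaller complexity and the determinant is additive), and the negative direction does run through the branched double cover. You also correctly anticipate the genuinely hard point: for $h^{\pm1}\sigma_2^{m}$ with $|m|\ge 4$ the double covers \emph{are} $L$-spaces and the links are homologically thin, so no $L$-space or thinness filter can eliminate them; these links were shown non-quasi-alternating only later, by Greene's Donaldson-type lattice-embedding obstruction in \cite{G2}. (Two smaller remarks: families (2) and (3) are each infinite --- $m$, respectively $n$, ranges over $\mathbb{Z}$ --- so the non-QA direction there is not a finite check by hand, although for $n$ even in family (2) the determinant formulas in the appendix give $\det(L)=0$, which kills quasi-alternateness immediately.)

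The genuine gap is your structural claim that for a closed $3$-braid the branched double cover $\Sigma(L)$ is a Seifert fibred space over $S^2$ whose invariants are read off from $n$ and the exponents. This is false: $\Sigma(L)$ is the total space of an open book whose page is a once-punctured torus (the double cover of the disk branched over the three braid strands), and for generic words in family (1) these manifolds are hyperbolic, not Seifert fibred. Consequently the two tools your negative direction leans on --- the characterisation of Seifert fibred $L$-spaces via taut foliations, and monotonicity of the orbifold Euler number in $n$ --- are simply unavailable, and there is no star-shaped plumbing whose intersection lattice you could feed into the diagonalisation step. This is precisely why Baldwin had to develop the genus-one, one-boundary-component open book machinery of \cite{B} (surgery exact triangles organised by the power $n$ of the full twist $h$) to decide the $L$-space condition for these covers. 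Your sketch becomes a correct outline if you replace the Seifert fibred step by that machinery, and replace the plumbing lattice by the Goeritz-form lattice embedding argument Greene applies to the determinant-$4$ family $h^{\pm1}\sigma_2^{m}$; as written, the negative direction would fail at its first step in family (1).
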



The following proposition introduces explicit formulas for  the
determinant of any closed 3-braid.

\begin{prop}\label{thm}
\begin{enumerate}
\item Suppose that  $L=\widehat{h^n\sigma_1^{p_1}\sigma_2^{-q_1}\dots \sigma_1^{p_s}\sigma_2^{-q_s}}$,
where $s, p_i$ and $q_i$ are positive integers. Let $p=\sum_{i=1}^{s} p_i$ and $q=\sum_{i=1}^{s} q_i$.
\begin{enumerate}
\item If $n$ is odd, then
$$det(L)= 4 + pq + \displaystyle\sum_{k=2, \\ i_1<\dots<i_k}^{s} p_{i_1}\dots p_{i_k}(q_{i_1}+\dots +q_{i_2-1})\dots
(q_{i_{k-1}}+\dots +q_{i_{k}-1})(q-(q_{i_1}+\dots +q_{i_{k-1}})).$$
\item If $n$ is even, then
$$det(L)=pq+\displaystyle\sum_{k=2, \\ i_1<\dots<i_k}^{s} p_{i_1}\dots p_{i_k}(q_{i_1}+\dots +q_{i_2-1})\dots
(q_{i_{k-1}}+\dots +q_{i_{k}-1})(q-(q_{i_1}+\dots +q_{i_{k-1}}))$$
\end{enumerate}
\item If $L=\widehat{h^n\sigma_2^{m}}$ where $m \in \mathbb{Z}$ then $det(L)=0$ if $n$ is even and $det(L)=4$ if $n$ is odd.\\
\item If $L=\widehat{h^n\sigma_1^{m}\sigma_2^{-1}}$ where $m \in \{-1,-2,-3\}$,
then $det(L)=2$ if $m=-2$  and $det(L)=2+(-1)^{3n+m}$ if $m=-1$ or $-3$.\\
\end{enumerate}

\end{prop}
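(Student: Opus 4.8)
The plan is to evaluate every one of these determinants through the reduced Burau representation $\bar\beta\colon B_3\to GL_2(\mathbb{Z}[t^{\pm1}])$ at $t=-1$. For the closure of a $3$-braid $b$ one has $\det(\hat b)=|\Delta_{\hat b}(-1)|$, and the Burau matrix computes the Alexander polynomial up to the normalizing factor $1+t+t^{2}$, which takes the value $1$ at $t=-1$; hence $\det(\hat b)=\bigl|\det\!\left(\bar\beta(b)|_{t=-1}-I\right)\bigr|$. First I would record the images of the generators at $t=-1$, namely $\sigma_1\mapsto\left(\begin{smallmatrix}1&0\\1&1\end{smallmatrix}\right)$ and $\sigma_2\mapsto\left(\begin{smallmatrix}1&-1\\0&1\end{smallmatrix}\right)$, and check by a direct multiplication that the full twist satisfies $h=(\sigma_1\sigma_2)^{3}\mapsto -I$, so that $h^{n}\mapsto(-1)^{n}I$. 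Writing $W$ for the image of the portion of the word that does not involve $h$, this immediately separates the computation according to the parity of $n$.

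The key simplification is that each such $W$ lies in $SL_2$, so its characteristic polynomial gives $\det(W-I)=2-\operatorname{tr}W$ and $\det(W+I)=2+\operatorname{tr}W$. Since $\bar\beta(b)|_{t=-1}=(-1)^{n}W$, this yields $\det(L)=|2-\operatorname{tr}W|$ when $n$ is even and $\det(L)=|2+\operatorname{tr}W|$ when $n$ is odd, reducing the whole proposition to a single trace computation in each family. For family (2) one finds $W=\left(\begin{smallmatrix}1&-m\\0&1\end{smallmatrix}\right)$ with $\operatorname{tr}W=2$, giving $0$ or $4$; for family (3) one finds $W\mapsto\left(\begin{smallmatrix}1&1\\m&m+1\end{smallmatrix}\right)$ with $\operatorname{tr}W=m+2$, so that $\det(L)$ equals $|m|$ for $n$ even and $|m+4|$ for $n$ odd, and a direct check over the admissible values $m\in\{-1,-2,-3\}$ and the two parities of $n$ then yields the listed determinants.

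The substance is family (1). Here I would factor each syllable as an elementary matrix, $\sigma_1^{p_i}\mapsto L(p_i):=I+p_iE_{21}$ and $\sigma_2^{-q_i}\mapsto U(q_i):=I+q_iE_{12}$, so that $W=\prod_{i=1}^{s}L(p_i)U(q_i)$, and then expand $\operatorname{tr}W=W_{11}+W_{22}$ as a weighted sum over closed walks on the two states $\{1,2\}$: a descent $2\to1$ is possible only at an $L$-factor (weight $p_i$) and an ascent $1\to2$ only at a $U$-factor (weight $q_i$), so a closed walk making $k$ descents (and hence $k$ ascents) is encoded by a chain $i_1<\dots<i_k$ of chosen $L$-factors together with a choice of intervening $U$-factor in each gap. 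Summing the $q$-weights across a gap produces exactly the contiguous factor $(q_{i_j}+\dots+q_{i_{j+1}-1})$ of the statement, and separating the walks based at state $1$ from those based at state $2$ and adding them makes the two outer (boundary) gaps merge. The empty walk contributes $2$, the length-two walks combine to $pq$, and the remaining walks give
\[
\operatorname{tr}W=2+pq+\sum_{k=2}^{s}\ \sum_{i_1<\dots<i_k} p_{i_1}\cdots p_{i_k}\,(q_{i_1}+\dots+q_{i_2-1})\cdots(q_{i_{k-1}}+\dots+q_{i_k-1})\bigl(q-(q_{i_1}+\dots+q_{i_k-1})\bigr).
\]
Feeding $\operatorname{tr}W=2+pq+(\cdots)$ into the two formulas of the previous paragraph then produces $4+pq+(\cdots)$ for $n$ odd and $pq+(\cdots)$ for $n$ even.

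The step I expect to be the main obstacle is precisely this combinatorial bookkeeping: showing that the weight attached to a chain $i_1<\dots<i_k$ collapses to the displayed product of gap-sums, and in particular that the walks based at the two states recombine so that their outer gap-weights add to the single factor $q-(q_{i_1}+\dots+q_{i_k-1})$. A clean way to keep this under control is an induction on $s$ that peels off the final block $L(p_s)U(q_s)$ and tracks all four entries of the partial products rather than the trace alone; this simultaneously records the information needed at both ends of the walk and makes transparent why the boundary gaps merge in the final step.
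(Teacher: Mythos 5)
Your method is sound and genuinely different from the paper's. The paper works through Birman's formula for the Jones polynomial of a closed $3$-braid, $V_{\hat\alpha}(t)=(-\sqrt{t})^{e_\alpha}\left(t+t^{-1}+\operatorname{tr}\psi_t(\alpha)\right)$, evaluated at $t=-1$: families (2) and (3) are dispatched by explicit Jones polynomials, and for family (1) the paper computes $\det(\hat\beta)$ of the alternating closure ($n=0$ case) by counting spanning trees of the Tutte graph of its standard diagram, the full twist entering through $\psi_t(h)=t^3I_2$ together with a positivity claim for $\operatorname{tr}\psi_{-1}(\beta)$. You instead invoke the Burau--Alexander identity $\det(\hat b)=\bigl|\det\bigl(\bar\beta(b)|_{t=-1}-I\bigr)\bigr|$ (legitimate, since $1+t+t^2$ equals $1$ at $t=-1$; your generator matrices are the paper's reduced Burau matrices at $t=-1$ up to conjugation by the basis swap, which affects nothing), reduce everything to $\det(W\mp I)=2\mp\operatorname{tr}W$ for $W\in SL_2(\mathbb{Z})$, and compute the trace in family (1) by a closed-walk (transfer-matrix) expansion. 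I checked the step you single out as the main obstacle and it works exactly as you describe: for a fixed descent set $i_1<\dots<i_k$, the walks based at state $1$ contribute the boundary factor $q_1+\dots+q_{i_1-1}$, those based at state $2$ contribute $q_{i_k}+\dots+q_s$, the inner gap factors are common to both, and the two boundary sums add to $q-(q_{i_1}+\dots+q_{i_k-1})$; the $k=1$ walks give $pq$ and the two empty walks give $2$. Your route trades the paper's diagrammatic, graph-theoretic count for self-contained $2\times 2$ matrix algebra; the paper's argument has the virtue of exhibiting the family (1) formula as a spanning-tree count for an alternating link.

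There is, however, one claim you must correct, and it is a reporting error rather than a flaw of method --- indeed it exposes an error in the paper. For family (3) your formulas give $\det(L)=|m|$ for $n$ even and $|m+4|$ for $n$ odd, hence for $m=-3$ the values $3$ ($n$ even) and $1$ ($n$ odd). The proposition's formula $2+(-1)^{3n+m}$ assigns the opposite values, $1$ ($n$ even) and $3$ ($n$ odd), so your assertion that a direct check ``yields the listed determinants'' is false in this subcase. Your values are in fact the correct ones: for $n=0$, $m=-3$ the word $\sigma_1^{-3}\sigma_2^{-1}$ Markov-destabilizes to $\sigma_1^{-3}\in B_2$, whose closure is the trefoil, of determinant $3$ (not $1$); and for $n=1$, $m=-3$ the closure of $h\sigma_1^{-3}\sigma_2^{-1}$ reduces, using centrality of $h$ and the braid relation, to the closure of $\sigma_2\sigma_1$, the unknot, of determinant $1$ (not $3$). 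The paper's error can be traced to its displayed Jones polynomial for this family: the trace term is $\operatorname{tr}\psi_t(\sigma_1^{-3}\sigma_2^{-1})=-t^{-2}$, whereas the paper writes $(-t)^{-2}=t^{-2}$; the discrepancy is invisible for $m=-1$, where the exponent is odd and $(-t)^{-1}=-t^{-1}$, but not for $m=-3$. So rather than claiming agreement, you should state that your computation corrects case (3) of the proposition for $m=-3$ by swapping the two parities of $n$.
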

\begin{proof}
Birman in \cite{Bi} showed that the Jones polynomial of a closed
3-braid $\alpha$ is given by
$$
 V_{\hat\alpha}(t)=(-\sqrt{t})^{e_\alpha}(t+t^{-1}+tr(\psi_t(\alpha))) $$
where  $e_\alpha$ is the exponent sum of $\alpha$ as a word in the
elementary braids $\sigma_1$ and $\sigma_2$. Also, $\psi_t: B_{3}
\longrightarrow GL(2, \mathbb{Z}[t,t^{-1}])$ is the Burau
representation defined by $\psi_t(\sigma_1)=\left [\begin{array}{cc}
-t&1\\
0&1
\end{array} \right ] $ and $\psi_t(\sigma_2)=\left [\begin{array}{cc}
1&0\\
t&-t
\end{array} \right ] $, and $tr$ denotes the usual matrix-trace function.
Recall that for any link $L$, we have $det(L)=|V_L(-1)|$. The values
of the determinants in cases 2 and 3 are obtained easily. Indeed, we
have explicit formulas for the Jones polynomials in these cases:
\begin{align*}
   V_{\widehat{h^n\sigma_2^{m}}}(t) & = (-\sqrt{t})^{m+6n}\left(t+t^{-1}+t^{3n}+(-1)^mt^{3n+m}\right), \\
   V_{\widehat{h^n\sigma_1^{-1}\sigma_2^{-1}}}(t) & =(-\sqrt{t})^{6n-2}\left(t+t^{-1}+t^{3n}(-t)^{-1}\right),\\
   V_{\widehat{h^n\sigma_1^{-2}\sigma_2^{-1}}}(t) & =(-\sqrt{t})^{6n-3}\left(t+t^{-1}\right),\\
   V_{\widehat{h^n\sigma_1^{-3}\sigma_2^{-1}}}(t) & =(-\sqrt{t})^{6n-4}\left(t+t^{-1}+t^{3n}(-t)^{-2}\right).\\
 \end{align*}

Let $\beta=\widehat{\sigma_1^{p_1}\sigma_2^{-q_1}\dots
\sigma_1^{p_s}\sigma_2^{-q_s}}$ and  $\alpha=h^n\beta$. Since
$\psi_t(\sigma_1\sigma_2)^3=t^3.I_2$, then
$V_{\widehat{\alpha}}(t)=(-\sqrt{t})^{e_\alpha}[t+t^{-1}+t^{3n}tr(\psi_t(\beta))]$.
Consequently, $det(\widehat\alpha)=|V_{\widehat
\alpha}(-1)|=|-1-1+(-1)^{n}tr(\psi_{-1}((\beta))|$. On the other
hand, for any positive integers $n$ and $m$, we have:
$\psi_{-1}(\sigma_1^{n})=\left [\begin{array}{cc}
1&n\\
0&1
\end{array} \right ] $ and $\psi_{-1}(\sigma_2^{-m})=\left [\begin{array}{cc}
1&0\\
m&1
\end{array} \right ]. $ Hence  $\psi_{-1}(\sigma_1^n\sigma_2^{-m})=\left [\begin{array}{cc}
1+nm&n\\
m&1
\end{array} \right ].$
A simple induction implies that $tr(\psi_{-1}((\beta))$ is always
positive. In conclusion, $det(\widehat{\alpha})=det(\widehat\beta)$
if $n$ is even and $det(\widehat{\alpha})=det(\widehat\beta)+4$, if
$n$ is odd.

To prove the formula in the  case 1, we first compute  the
determinant of the alternating link $\widehat
\beta=\widehat{\sigma_1^{p_1}\sigma_2^{-q_1}\dots
\sigma_1^{p_s}\sigma_2^{-q_s}}$  by counting the number of spanning
trees of the Tutte graph associated with the diagram of
$\widehat\beta$ given in figure \ref{tuttegraph}. This graph is made
up of a cycle $u_1u_2\dots u_qu_1$ together with  an extra vertex
$w$ of degree $p$ which is connected to every vertex $u_{q_i}$ by
$p_i$ parallel edges.

\begin{figure}
\begin{center}
\includegraphics[width=6cm,height=6cm]{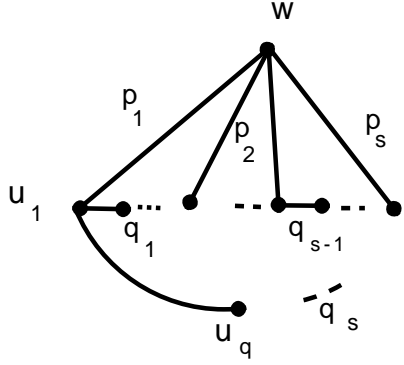}
\end{center}
\caption{The Tutte graph associated with the diagram of
$\widehat\beta$}
\label{tuttegraph}
\end{figure}

A spanning tree where $w$ has degree 1, consists  of  an edge among
the $p$ edges incident to $w$ and a spanning tree of the cycle.
There are obviously $pq$ such trees. For a spanning  tree where $w$
has degree $k\geq 2$, we first  make a choice of $k$ mutually non
parallel edges incident to $w$, say $wu_{q_{i_1}},\dots,
wu_{q_{i_k}} $. Then we break all the cycles of type $wu_
{q_{i_r}}\dots wu_ {q_{i_{r+1}}}w$ and the cycle $wu_{q_{i_k}}\dots
u_{q_{i_{q}}}w$   by removing from each cycle an edge not incident
to $w$. Note that  the number of spanning trees will be then the
product $p_{i_1}\dots p_{i_k}(q_{i_1}+\dots+ q_{i_2-1}) \dots
(q_{i_{k-1}}+\dots +q_{i_{k}-1})(q-(q_{i_1}\dots q_{i_k-1}))$. The
total number of spanning trees is obtained by taking the sum through
all $k \leq s$ and $i_1 < \dots < i_k$. If $n$ is even, then
$det(L)=det(\widehat \beta)$. However, if $n$ is odd, then
$det(L)=|-1-1-tr(\psi_{-1}(\beta)|=tr(\psi_{-1}(\beta))+2=det(\widehat\beta)+4$.
\end{proof}

Now, we  prove that for all quasi-alternating closed 3-braids, we
have $c(L)\leq det(L)$. We start by considering the class of links
in the first case of Baldwin's Theorem. If $n=0$,  then the link is
alternating and the result holds. We will prove the result  for
$n=1$, the case $n=-1$ is similar. Assume that  $L=\widehat{
(\sigma_1\sigma_2)^3\sigma_1^{p_1}\sigma_2^{-q_1}\dots
\sigma_1^{p_s}\sigma_2^{-q_s}}$, then: If $s>1$, or ($s=1, p_1>1$
and $q_1>1$),  then by the proposition above $det(L)\geq 4+pq \geq
4+p+q$. On the other hand we have

$$\begin{array}{rl}
(\sigma_1\sigma_2)^3\sigma_1^{p_1}\sigma_2^{-q_1}\dots \sigma_1^{p_s}\sigma_2^{-q_s}&\equiv
(\sigma_2\sigma_1)^3\sigma_1^{p_1}\sigma_2^{-q_1}\dots \sigma_1^{p_s}\sigma_2^{-q_s}\\
& \equiv \sigma_2\sigma_1\sigma_2\sigma_1\sigma_2\sigma_1\sigma_1^{p_1}\sigma_2^{-q_1}\dots \sigma_1^{p_s}\sigma_2^{-q_s}\\
& \equiv \sigma_1\sigma_2\sigma_1\sigma_2\sigma_1\sigma_1^{p_1}\sigma_2^{-q_1}\dots \sigma_1^{p_s}\sigma_2^{-q_s+1}.\\
\end{array}$$

Thus, $c(L)\leq 5+p+q-1=4+p+q \leq det(L)$.

If $s=1$, $p=1$ and $q>1$, then $det(L)=4+q$, and

$$\begin{array}{rl}
(\sigma_1\sigma_2)^3\sigma_1\sigma_2^{-q}&\equiv
\sigma_1\sigma_2\sigma_1\sigma_2 \sigma_1\sigma_2 \sigma_1
\sigma_2^{-q+1}\sigma_2^{-1}\\&\equiv
\sigma_2\sigma_1\sigma_2\sigma_1
\sigma_2\sigma_1 \sigma_1 \sigma_2^{-q+1}\sigma_2^{-1}\\
& \equiv \sigma_1\sigma_2\sigma_1\sigma_2\sigma_1 \sigma_1 \sigma_2^{-q+1} \\
& \equiv \sigma_2\sigma_1\sigma_2\sigma_2\sigma_1 \sigma_1 \sigma_2^{-q+1}\\
& \equiv \sigma_1\sigma_2\sigma_2\sigma_1 \sigma_1 \sigma_2^{-q+2}.\\
\end{array}$$

Thus, $c(L)\leq 3+q \leq det(L)$.

If $s=1$, $p>1$ and $q=1$, then $det(L)=4+p$, and

$$\begin{array}{rl}
(\sigma_1\sigma_2)^3\sigma_1^{p}\sigma_2^{-1}&\equiv
\sigma_1\sigma_2 \sigma_1\sigma_2\sigma_1\sigma_2 \sigma_1^p
\sigma_2^{-1}\\&\equiv \sigma_2\sigma_1
\sigma_2\sigma_1\sigma_2\sigma_1 \sigma_1^p \sigma_2^{-1}\\
& \equiv \sigma_1\sigma_2\sigma_1\sigma_2\sigma_1 \sigma_1^{p} \\
& \equiv \sigma_1\sigma_1\sigma_2\sigma_1\sigma_1^{p+1}\\
& \equiv \sigma_1^{p+4}.\\
\end{array}$$

Thus, $c(L)=p+4=det(L).$

If $s=1$, $p=1$ and $q=1$, then  $det(L)=5$ and
$$\begin{array}{rl}
(\sigma_1\sigma_2)^3\sigma_1\sigma_2^{-1}&\equiv \sigma_1\sigma_2
\sigma_1\sigma_2\sigma_1\sigma_2 \sigma_1 \sigma_2^{-1}\\&\equiv
\sigma_2\sigma_1
\sigma_2\sigma_1\sigma_2\sigma_1 \sigma_1 \sigma_2^{-1}\\
& \equiv \sigma_1\sigma_2\sigma_1\sigma_2\sigma_1^{2} \\
& \equiv \sigma_1\sigma_1\sigma_2\sigma_2\sigma_1^{2}\\
& \equiv \sigma_1^{2}\sigma_2\sigma_1^{3}\\
&\equiv \sigma_1^{5}.
\end{array}$$

Thus, $c(L)=5=det(L)$.

The two other cases in Baldwin's Theorem involve only a finite
number of knots. A routine case by case check shows that the result
holds for all these knots. This ends the proof of Conjecture 2.10 in
the case of closed 3-braids.

\end{document}